\documentclass[11pt,amstex,amssymb]{amsart}
\usepackage{amsmath,amsthm,amsfonts,amssymb,amscd}
\usepackage[latin1]{inputenc}
\usepackage[all]{xy}
\usepackage[dvips]{graphicx}
\usepackage{amsmath}
\usepackage{amsthm}
\usepackage{amsfonts}
\setlength{\textheight}{23cm}
\setlength{\textwidth}{16cm}
\setlength{\oddsidemargin}{0cm}
\setlength{\evensidemargin}{0cm}
\usepackage{xcolor}

\usepackage{amsmath}
\usepackage{amssymb}
\usepackage{amsthm}
 %new footnote counter

\def\fa{{\mathcal{F}}}
\def\O{{\mathcal{O}}}

\def\M{{\mathcal{M}}}
\def\D{{\mathcal{D}}}
\def\U{{\mathcal{U}}}
\def\B{{\mathcal{B}}}

  \newcommand{\dsum}{\displaystyle\sum}

\title{Irreducible holonomy groups and Riccati foliations in higher complex dimension}
\author{V. Le\'on}
\author{M. Martelo}
\author{B. Sc\'ardua}
\address{V. Le\'on. ILACVN - CICN, Universidade Federal da Integração Latino-Americano, Parque tecnológico de Itaipu, Foz do Iguaçu-PR, 85867-970 - Brazil}
\email{victor.leon@unila.edu.br}

\address{M. Martelo. Instituto de Matem\'atica - Universidade Federal Fluminense, Niter\'oi -
Rio de Janeiro-RJ, 24210-201 - Brazil}
\email{mitchaelmartelo@id.uff.br}

\address{B. Sc\'ardua. Instituto de Matem\'atica - Universidade Federal do Rio de Janeiro,
CP. 68530-Rio de Janeiro-RJ, 21945-970 - Brazil}
\email{scardua@im.ufrj.br}

\subjclass[2000]{Primary 37F75, 57R30; Secondary 32M25, 32S65.}

\date{}

\begin{document}

\begin{abstract}
We study  groups of germs of complex diffeomorphisms having a property called
{\it irreducibility}. The notion is motivated by a similar property of the fundamental group of the complement of an irreducible hypersurface in the complex projective space. Natural examples of such groups of germ maps are given by holonomy groups and monodromy groups of integrable systems (foliations) under certain conditions on the singular or ramification set. The case of complex dimension one is studied in  \cite{VMB} where finiteness is proved for irreducible groups under certain arithmetic hypothesis on the linear part. In dimension $n \geq 2$ the picture changes since linear groups are not always abelian in dimension two or bigger. Nevertheless,  we still obtain a finiteness result under some   conditions in the linear part of the group, for instance if the linear part is abelian. Examples are given illustrating the role of our hypotheses.
Applications are given to the framework of holomorphic foliations and analytic deformations of rational fibrations by Riccati foliations.
 \end{abstract}

\maketitle

\newtheorem{Theorem}{Theorem}[section]
\newtheorem{Corollary}{Corollary}[section]
\newtheorem{Proposition}{Proposition}[section]
\newtheorem{Lemma}{Lemma}[section]
\newtheorem{Claim}{Claim}[section]
\newtheorem{Definition}{Definition}[section]
\newtheorem{Example}{Example}[section]
\newtheorem{Remark}{Remark}[section]
\newtheorem*{cltheorem}{Theorem}
\newtheorem{Question}{Question}[section]

%%%%%%%%%%%%%%%%%%%%%% Paper-specific definitions %%%%%%%%%%%%%%%%%
\newcommand\virt{\rm{virt}}
\newcommand\SO{\rm{SO}}
\newcommand\G{\varGamma}
\newcommand\Om{\Omega}
\newcommand\Kbar{{K\kern-1.7ex\raise1.15ex\hbox to 1.4ex{\hrulefill}}}
\newcommand\codim{\rm{codim}}
\renewcommand\:{\colon}
\newcommand\s{\sigma}
\def\vol#1{{|{\bfS}^{#1}|}}

\def\fa{{\mathcal F}}
\def\H{{\mathcal H}}
\def\O{{\mathcal O}}
\def\P{{\mathcal P}}
\def\L{{\mathcal L}}
\def\C{{\mathcal C}}
\def\Z{{\mathcal Z}}

\def\M{{\mathcal M}}
\def\N{{\mathcal N}}
\def\R{{\mathcal R}}
\def\ea{{\mathcal e}}
\def\Oa{{\mathcal O}}
\def\ee{{\bfE}}

\def\A{{\mathcal A}}
\def\B{{\mathcal B}}
\def\H{{\mathcal H}}
\def\V{{\mathcal V}}
\def\U{{\mathcal U}}
\def\al{{\alpha}}
\def\be{{\beta}}
\def\ga{{\gamma}}
\def\Ga{{\Gamma}}
\def\om{{\omega}}
\def\Om{{\Omega}}
\def\La{{\Lambda}}
\def\ov{\overline}
\def\dd{{\bfD}}
\def\pp{{\bfP}}

\def\nn{{\mathbb N}}
\def\zz{{\mathbb Z}}
\def\bq{{\mathbb Q}}
\def\bp{{\mathbb P}}
\def\bd{{\mathbb D}}
\def\bh{{\mathbb H}}
\def\te{{\theta}}
\def\rr{{\mathbb R}}
\def\bb{{\mathbb B}}
\def\pp{{\mathbb P}}
\def\dd{{\mathbb D}}
\def\zz{{\mathbb Z}}
\def\qq{{\mathbb Q}}
\def\hh{{\mathbb H}}
\def\nn{{\mathbb N}}
\def\LL{{\mathbb L}}
\def\co{{\mathbb C}}
\def\qq{{\mathbb Q}}
\def\na{{\mathbb N}}
\def\esima{${}^{\text{\b a}}$}
\def\esimo{${}^{\text{\b o}}$}
\def\lg{\lambdangle}
\def\rg{\rangle}
\def\ro{{\rho}}
\def\lV{\left\Vert}
\def\rV{\right\Vert }
\def\lv{\left\vert}
\def\rv{\right\vert }
\def\Sa{{\mathcal S}}
\def\D{{\mathcal D  }}

\def\si{{\bf S}}
\def\ve{\varepsilon}
\def\vr{\varphi}
\def\lV{\left\Vert }
\def\rV{\right\Vert}
\def\lv{\left\vert }
\def\rv{\right\vert}
\def\Range{\rm{{R}}}
\def\vol{\rm{{Vol}}}
\def\ind{\rm{{i}}}

\def\Int{\rm{{Int}}}
\def\Dom{\rm{{Dom}}}
\def\supp{\rm{{supp}}}
\def\Aff{\mbox{Aff}}
\def\Exp{\rm{{Exp}}}
\def\Hom{\rm{{Hom}}}
\def\codim{\rm{{codim}}}
\def\cotg{\rm{{cotg}}}
\def\dom{\rm{{dom}}}
\def\Sa{\mathcal{{S}}}

\def\VIP{\rm{{VIP}}}
\def\argmin{\rm{{argmin}}}
\def\Sol{\rm{{Sol}}}
\def\Ker{\rm{{Ker}}}
\def\Sat{\rm{{Sat}}}
\def\diag{\mbox{diag}}
\def\rank{\rm{{rank}}}
\def\Sing{\rm{{Sing}}}
\def\sing{\rm{{sing}}}
\def\hot{\rm{{h.o.t.}}}

\def\Fol{\rm{{Fol}}}
\def\grad{\rm{{grad}}}
\def\id{\rm{{id}}}
\def\Id{\mbox{Id}}
\def\sep{\rm{{Sep}}}
\def\Aut{\mbox{Aut}}
\def\Sep{\rm{{Sep}}}
\def\Res{\rm{{Res}}}
\def\ord{\rm{{ord}}}
\def\h.o.t.{\rm{{h.o.t.}}}
\def\Hol{\mbox{Hol}}
\def\Diff{\mbox{Diff}}
\def\SL{\mbox{SL}}
\def\Aut{\mbox{Aut}}
\def\GL{\mbox{GL}}
\tableofcontents
\section{Introduction}
In  \cite{VMB} we have introduced the notion of irreducible group of germs of diffeomorphisms in dimension $1$.
In that work we give conditions under which such a group is finite and prove some applications of this
to the problem of existence of holomorphic first integrals for codimension one foliations.
In this work we investigate the extension of this to the case of any dimension.
We make the following definition:
\begin{Definition}
\label{Definition:groupgen}{\rm
A group $G$ is {\it irreducible} if it admits a finite set of generators $g_1,\ldots,g_{\nu+1}$ such that:
\begin{itemize}
\item[{\rm(a)}] $g_1\circ \cdots \circ g_{\nu+1}=e_G$
\item[{\rm(b)}] $g_i$ and $g_j$ are conjugate in $G$ for all $i,j$.
\end{itemize}
}
\end{Definition}

We shall refer to $\{g_1,\ldots,g_{\nu+1}\}$ as a {\it basic set of generators}.
 The above definition does not exclude the possibility that $g_i=g_j$.  An irreducible abelian group is finite cyclic: indeed,
since the group is abelian we have $g_i=g_j$, for all $i,j$. Therefore the group is generated by
an element $g_1$ such that  $g_1^{\nu +1} =e_G$.

We shall denote by $\Diff(\mathbb C^n,0)$ the group of germs of complex diffeomorphisms fixing the origin $0 \in \co^n$.
In this work we shall focus on irreducible subgroups $G\subset \Diff(\mathbb C^n,0)$.
It is important to point-out that conditions (a) and (b) in Definition~\ref{Definition:groupgen} are independent and therefore not equivalent (cf. Proposition~\ref{Proposition:examples}).
Every cyclic subgroup {\it of finite order} $G\subset \Diff(\mathbb C^n,0)$ is irreducible.
A first question would be whether finite subgroups of $\Diff(\mathbb C^n,0)$ are also irreducible.
A second, more challenging, is whether irreducible subgroups of $\Diff(\mathbb C^n,0)$ are always finite.
The above questions have negative response even in the linear case (group of matrices) (cf. Proposition~\ref{Proposition:examples}).

From now on we shall consider $\mathbb N=\{0,1,2,\ldots\}$ and $\mathbb N^*=\{1,2,\ldots\}$. Our basic  result is the following:

\begin{Theorem}\label{Theorem:irreducibledimn}{\rm
Let $ G\subset \Diff(\mathbb C^n,0)$ be an irreducible group having a basic set of generators $\{f_1,\ldots,f_{\nu+1}\}$ with the same linear part $Df_j(0)=A\in \GL(n,\mathbb C)$. Assume that the eigenvalues of $A$ are $\lambda_1,\ldots,\lambda_n$ such that $\lambda_j$
is a root of the unit of order $p^{s_j}$, where $p$ is prime and
$s_j\in\mathbb{N}$.
Then $ G$ is finite cyclic. Indeed, $G$ is analytically conjugate to a cyclic group generated by a diagonal matrix of the form $A=\diag(\xi_1,\ldots,\xi_n)$ where $\xi_j$ is a root of the unit of order $p^{r_j}$.}
\end{Theorem}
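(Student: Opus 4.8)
The plan is to extract everything contained in the linear part, and then to remove the nonlinear jets one order at a time using conditions (a) and (b) simultaneously.

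First I would differentiate relation (a) at the origin. Since every generator has the same linear part $A$, the chain rule gives $A^{\nu+1}=Df_1(0)\cdots Df_{\nu+1}(0)=\Id$, so $A$ has finite order and is therefore diagonalizable. As its eigenvalues $\lambda_1,\dots,\lambda_n$ are roots of unity of orders $p^{s_1},\dots,p^{s_n}$, the order of $A$ equals $p^{s}$ with $s=\max_j s_j$, and after a linear change of coordinates I may assume $A=\diag(\lambda_1,\dots,\lambda_n)$. Applying the derivative $D\colon G\to\GL(n,\co)$ shows that the linear part group $L:=D(G)$ is the cyclic group $\langle A\rangle$ of order $p^{s}$; in particular the linear part of $G$ is abelian, which is the favourable situation. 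It then only remains to prove that $G$ is analytically conjugate to $L$: once $\varphi G\varphi^{-1}$ is linear, each generator is sent to its common linear part, so $\varphi G\varphi^{-1}=\langle B\rangle$ is cyclic with $B$ conjugate to $A$, and diagonalizing $B$ yields the stated normal form with $\xi_j$ of order $p^{r_j}$.

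To produce this conjugacy I would normalize the jets successively. Because the eigenvalues of $A$ are roots of unity, the nonresonant divisors $\lambda^{\alpha}-\lambda_k$ take finitely many values and are bounded away from $0$; hence there is no small divisor problem and a \emph{formal} simultaneous linearization automatically converges, so it suffices to kill the resonant part order by order. Assume inductively that after a formal change of coordinates all generators satisfy $f_j(x)=Ax+P_j(x)+\cdots$ with $P_j$ homogeneous of degree $d+1$ and all lower nonlinear terms already removed. Conjugation by $\id+U$, with $U$ homogeneous of degree $d+1$, replaces $P_j$ by $P_j-\delta_A(U)$, where $\delta_A(U)(x):=U(Ax)-A\,U(x)$; the obstruction thus lives in the cohomology $H:=\operatorname{coker}\delta_A$, spanned by the resonant monomials, on which the conjugation action of $A$ is trivial. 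Here conditions (a) and (b) combine. Expanding $f_1\circ\cdots\circ f_{\nu+1}$ to degree $d+1$ and using $A^{\nu+1}=\Id$ gives
\[
\sum_{j=1}^{\nu+1}A^{\nu+1-j}\,P_j(A^{\,j-1}x)=0 .
\]
On a resonant term $x^{\alpha}e_k$ (so $\lambda^{\alpha}=\lambda_k$) the $j$-th summand multiplies the coefficient by $\lambda_k^{(\nu+1-j)+(j-1)}=\lambda_k^{\nu}$, independently of $j$; projecting to $H$ this reads $\lambda_k^{\nu}\sum_j[P_j]=0$, hence $\sum_j[P_j]=0$ in $H$. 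On the other hand, writing each $f_j=\varphi_j f_1\varphi_j^{-1}$ with $\varphi_j\in G$ (condition (b)), and noting that at this stage every word in the generators is congruent to its linear part $A^{k_j}\in L$ modulo degree $d+1$, a direct computation gives $P_j\equiv A^{k_j}\!\cdot P_1$ modulo $\operatorname{Im}\delta_A$; since $A$ acts trivially on $H$ this yields $[P_j]=[P_1]$ for all $j$. Combining the two, $(\nu+1)[P_1]=0$ in the complex vector space $H$, whence $[P_1]=0$ and each $P_j$ is a coboundary.

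The main obstacle is to turn this order-by-order vanishing into a single simultaneous linearization of the whole group. Knowing that each $P_j$ is a coboundary lets one linearize a chosen generator, but the several $P_j$ need not coincide, so one cannot in general annihilate them all by one change of coordinates while preserving the inductive normal form; keeping all generators congruent to their common linear part at every stage is precisely where the arithmetic hypothesis should enter. I expect the prime-power condition to be used to control the higher resonances—governing the centralizer of $A$ and the words $A^{k_j}$ produced by (b)—so that the equalities $[P_j]=[P_1]$ persist after each partial normalization and the coboundaries can be matched; the examples of Proposition~\ref{Proposition:examples} should show that, without it, distinct primes allow resonances breaking this matching and the group may fail to be finite. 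Once the simultaneous formal—hence analytic—linearization is achieved, $G$ is finite, all generators coincide with $A$, and the reduction of the first paragraph identifies $G$ with the finite cyclic group generated by $\diag(\xi_1,\dots,\xi_n)$.
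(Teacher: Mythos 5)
Your linear-part reduction and your treatment of the resonant terms are correct, and they reproduce (in cohomological language) the paper's first claim: the relation $\sum_{j}A^{\nu+1-j}P_j(A^{j-1}x)=0$ from (a), together with $[P_j]=[P_1]$ from (b), kills the resonant part of every generator, and indeed no arithmetic hypothesis is needed for that step. But the step you flag as ``the main obstacle'' is not a loose end to be patched later --- it is the actual content of the theorem, and your proposal does not close it. Knowing that each $P_j$ lies in $\operatorname{Im}\delta_A$ (i.e.\ is purely non-resonant) does not allow the induction to proceed: since $\delta_A$ is injective on the span of non-resonant monomials, a single change of coordinates $\id+U$ annihilates all the $P_j$ only if $P_1=\cdots=P_{\nu+1}$, and nothing in your argument forces the non-resonant coefficients of distinct generators to agree. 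After conjugating so that $P_1=0$, the other generators may still carry nonzero (non-resonant) terms of degree $d+1$, so the inductive normal form is destroyed and the algorithm stalls.

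The paper closes exactly this gap, and in a way different from what you anticipate. Before the induction it puts \emph{one} generator $f_1$ in Poincar\'e--Dulac normal form, so that all non-resonant coefficients of $f_1$ vanish identically. Then, for each non-resonant multi-index $Q$ and each coordinate $r$, it builds a homomorphism $\psi_Q$ from $G$ into a product of affine groups sending $f_i$ to $w\mapsto(\lambda_r w+a^{(i)}_{r,Q})/\lambda^Q$; condition (b) makes these affine maps pairwise conjugate in the image group. The key input is then Lemma~\ref{keylemma} (from \cite{CL}): affine maps $z\mapsto\eta z+\beta_i$ with common multiplier $\eta$ a root of unity of order $l>1$ can be pairwise conjugate in the group they generate with distinct translation parts only if $l$ has two distinct prime divisors; when $l$ is a prime power, conjugacy forces $\beta_1=\cdots=\beta_{\nu+1}$. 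Here $\eta=\lambda_r/\lambda^Q\neq 1$ is a root of unity whose order is a power of the single prime $p$ --- this, and not the centralizer of $A$ or the words $A^{k_j}$, is where the arithmetic hypothesis enters --- so all the $a^{(i)}_{r,Q}$ coincide, hence equal those of $f_1$, hence vanish. The paper's examples show the mechanism is sharp: with eigenvalues of orders $2$ and $3$ the multiplier $\eta$ has order $6$, the lemma's dichotomy goes the other way, and one obtains irreducible, non-finite groups precisely because the generators' non-resonant coefficients differ. (A smaller point: your claim that absence of small divisors makes the formal linearization converge is not the right mechanism either; the standard route, implicit in the paper, is that formal linearization plus finite-order linear part gives $f^{N}=\Id$ for each generator, hence $G$ is finite and then analytically linearizable by averaging.)
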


We point-out that  Theorem~\ref{Theorem:irreducibledimn} holds with an analogous  statement for the case of groups of formal diffeomorphisms. Indeed, the proofs are based on some normal forms for the resonant case and on the Taylor series expansion, so the proofs apply {\it ipsis litteris} to the formal case.\\

The {\it linear part} of a group $G\subset \Diff(\mathbb C^n,0)$ is the subgroup
of $\GL(n,\mathbb C)$ of the linear maps $Df(0)$ where $f\in G$ and the coordinates are the canonical affine coordinates  $(z_1,\ldots,z_n)\in \mathbb C^n$.

\begin{Corollary}\label{Corollary:irreducibledimn}{\rm
Let $ G\subset \Diff(\mathbb C^n,0)$ be an irreducible  group with abelian linear part, having  any (not necessarily basic)  set of  generators $g_1,g_2,\ldots,g_{p^s}\in G$ such that  $g_1\circ \cdots\circ g_{p^s}=\Id$ for some prime number $p$ and some $s \in \mathbb N$.
Then  $ G$ is finite cyclic of order $p^\ell$ for some $\ell \leq s$.}
\end{Corollary}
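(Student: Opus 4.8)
The plan is to reduce the statement to Theorem~\ref{Theorem:irreducibledimn} by showing that the abelian hypothesis forces a single common linear part, that the distinguished relation of length $p^s$ pins the order of that linear part to a power of $p$, and finally that the diagonal model produced by Theorem~\ref{Theorem:irreducibledimn} has order exactly $p^\ell$ with $\ell\le s$. First I would introduce the linearization homomorphism $L\colon G\to\GL(n,\mathbb{C})$, $L(f)=Df(0)$, whose image $L(G)$ is the linear part of $G$ and is abelian by hypothesis. Since $G$ is irreducible it carries a basic set $\{f_1,\ldots,f_{\nu+1}\}$ with $f_i,f_j$ conjugate in $G$; applying $L$ shows the matrices $L(f_i)$ are pairwise conjugate inside the \emph{abelian} group $L(G)$, hence all equal to a single $A\in\GL(n,\mathbb{C})$. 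Because a basic set generates $G$, this forces $L(G)=\langle A\rangle$ to be cyclic, so the whole group has cyclic linear part generated by one matrix $A$.

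Next I would use the relation $g_1\circ\cdots\circ g_{p^s}=\Id$ to control the order of $A$. Writing $A_j=L(g_j)=A^{a_j}\in\langle A\rangle$, this relation descends to $A_1\cdots A_{p^s}=\Id$, i.e. $\sum_j a_j\equiv 0$ modulo $m:=\operatorname{ord}(A)$, while the fact that the $g_j$ generate gives $\gcd(a_1,\ldots,a_{p^s},m)=1$. The crucial point — and the heart of the argument — is to combine this with conjugacy so that the factors acquire a common linear part: once every $g_j$ is known to have linear part $A$ (which is where the interplay with irreducibility and the abelian reduction \emph{conjugate $\Rightarrow$ equal} must be used), one gets $a_j=1$ for all $j$, whence $p^s=\sum_j a_j\equiv 0\pmod m$ and therefore $m\mid p^s$. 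Equivalently $A^{p^s}=\Id$, so every eigenvalue $\lambda_j$ of $A$ is a $p^s$-th root of unity, i.e. a root of unity of order $p^{s_j}$ with $0\le s_j\le s$, and the eigenvalue hypothesis of Theorem~\ref{Theorem:irreducibledimn} is met with this single prime $p$.

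Finally I would invoke Theorem~\ref{Theorem:irreducibledimn}, which then gives that $G$ is finite cyclic and analytically conjugate to the group generated by $\diag(\xi_1,\ldots,\xi_n)$ with each $\xi_j$ of order $p^{r_j}$. The order of such a cyclic group is $\mathrm{lcm}(p^{r_1},\ldots,p^{r_n})=p^{\ell}$ with $\ell=\max_j r_j$, and since $r_j\le s_j\le s$ we obtain $\ell\le s$, which is the asserted conclusion. The main obstacle is the middle step: promoting the relation $g_1\circ\cdots\circ g_{p^s}=\Id$, whose factors are \emph{not} assumed to form a basic set, to a statement about a common linear part, so as to exclude an order $m$ divisible by a second prime (a possibility that, at the purely cyclic level, the relation $\sum a_j\equiv 0$ alone does not rule out). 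Everything preceding it is formal once $L(G)$ is cyclic, and everything following it is a direct citation of Theorem~\ref{Theorem:irreducibledimn} together with an elementary least-common-multiple computation; it is precisely the irreducibility (supplying conjugacy, hence a common linear part) together with the prime-power length $p^s$ of the distinguished relation that does the work.
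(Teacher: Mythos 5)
Your route is the same as the paper's own proof: use abelianness of the linear part to turn the pairwise conjugacy of the basic generators $f_1,\ldots,f_{\nu+1}$ into equality of their differentials, conclude that the linear part of $G$ is a cyclic group $\langle A\rangle$ of some order $m$, use the length-$p^s$ relation to force $m\mid p^s$, and finish by citing Theorem~\ref{Theorem:irreducibledimn}. Your first and last steps are correct, but the middle step --- the one you yourself call ``the main obstacle'' --- is a genuine gap, not merely a difficulty: you never prove that every $g_j$ has linear part $A$ (your $a_j=1$); you only assert that it follows from irreducibility ``supplying conjugacy, hence a common linear part''. It does not. The conjugacy clause of Definition~\ref{Definition:groupgen} applies only to the basic set $\{f_i\}$; the $g_j$ are expressly \emph{not} assumed pairwise conjugate (nor conjugate to the $f_i$), so nothing in the hypotheses identifies $Dg_j(0)$ with $A$. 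What the hypotheses actually yield is exactly your bookkeeping and nothing more: $Dg_j(0)=A^{a_j}$ with $\sum_j a_j\equiv 0\pmod{m}$ and $\gcd(a_1,\ldots,a_{p^s},m)=1$, which, as you correctly observe, does not exclude a prime $q\neq p$ dividing $m$.

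This gap cannot be repaired, because the statement as written is false. Take $A=\diag(e^{2\pi i/3},1,\ldots,1)$ and $G=\langle A\rangle$: this is a finite cyclic group, hence irreducible by the paper's own remark, with abelian linear part, and $g_1=A$, $g_2=A^2$ generate $G$ and satisfy $g_1\circ g_2=\Id$, so the hypotheses hold with $p=2$, $s=1$; yet $|G|=3$, not a power of $2$. Even finiteness can fail: in the paper's Example 2.1 the infinite irreducible group $G=\langle f_1,f_5,f_6\rangle$ has cyclic (hence abelian) linear part, and the four elements $f_1,f_5,f_6,(f_1\circ f_5\circ f_6)^{-1}$ generate $G$ with product $\Id$, so the hypotheses hold with $p^s=4$. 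For what it is worth, the paper's own proof of the corollary makes exactly the same unjustified jump, asserting that $g_1\circ\cdots\circ g_{p^s}=\Id$ ``implies that $A^{p^s}=\Id$''; so you have in effect reconstructed the published argument, gap included, and you were more candid than the paper about where it breaks. Both arguments (and the corollary itself) become correct if one additionally assumes that the $g_j$ are pairwise conjugate in $G$, or merely that all $Dg_j(0)$ coincide: then $a_j\equiv a$ with $\gcd(a,m)=1$ and $p^s a\equiv 0\pmod{m}$, hence $m\mid p^s$, and Theorem~\ref{Theorem:irreducibledimn} gives the conclusion with order $p^\ell$, $\ell\le s$.
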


As for an application to the framework of foliations:

\begin{Theorem}
\label{Theorem:leafholonomy}{\rm
Let $\fa$ be a codimension $n$ holomorphic foliation with singularities on a complex manifold $M^{n+2}$. Assume that there is a leaf $L_0\in \fa$ which is homeomorphic to
 $\mathbb P^2 \setminus C$ where $C \subset \mathbb P^{2}$ is an irreducible algebraic curve of degree $p^s$ for some prime number $p$ and some $s\in \mathbb N$.
Assume that the linear holonomy of $L_0$ is abelian. Then the holonomy group of the leaf $L_0$ is a finite cyclic  analytically linearizable group.}
\end{Theorem}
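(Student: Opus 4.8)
The plan is to identify the holonomy group of $L_0$ with an irreducible subgroup of $\Diff(\mathbb{C}^n,0)$ satisfying the hypotheses of Corollary~\ref{Corollary:irreducibledimn}, and then to promote the resulting finiteness to analytic linearizability. First I would set up the holonomy. Since $\fa$ has codimension $n$ on $M^{n+2}$, its leaves are complex surfaces; fixing a base point $q\in L_0$ and a transverse section we obtain a germ $(\Sigma,q)\cong(\mathbb{C}^n,0)$ and a holonomy homomorphism $\mathrm{hol}\colon \pi_1(L_0,q)\to \Diff(\Sigma,q)=\Diff(\mathbb{C}^n,0)$ whose image is the holonomy group $G=\Hol(L_0)$. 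By hypothesis the linear holonomy, that is, the linear part of $G$, is abelian. Both the conjugacy relations and the product relations I use below are topological invariants, so the fact that $L_0\cong \mathbb{P}^2\setminus C$ is only a homeomorphism causes no difficulty: everything is read off from $\pi_1(L_0)\cong\pi_1(\mathbb{P}^2\setminus C)$ and transported by $\mathrm{hol}$.

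The heart of the argument is the analysis of $\pi_1(\mathbb{P}^2\setminus C)$ for the irreducible curve $C$ of degree $d=p^s$. By the Zariski--van Kampen theorem, a generic line $\ell\cong\mathbb{P}^1$ meets $C$ transversally in $d$ points, the inclusion $\ell\setminus C\hookrightarrow \mathbb{P}^2\setminus C$ is $\pi_1$-surjective, and $\ell\setminus C$ is a $d$-punctured sphere. Hence $\pi_1(\mathbb{P}^2\setminus C)$ is generated by meridians $\mu_1,\ldots,\mu_d$ around the branches of $C$, subject in particular to the relation $\mu_1\cdots\mu_d=1$ inherited from the punctured $\mathbb{P}^1$. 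Since $C$ is irreducible, any two meridians are conjugate in $\pi_1(\mathbb{P}^2\setminus C)$. Pushing forward, the elements $g_i:=\mathrm{hol}(\mu_i)\in G$ are mutually conjugate generators of $G$ satisfying $g_1\circ\cdots\circ g_{p^s}=\Id$. This exactly exhibits $G$ as an irreducible group in the sense of Definition~\ref{Definition:groupgen}, with a basic set of $p^s$ generators.

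With this structure in hand I would apply Corollary~\ref{Corollary:irreducibledimn}: $G$ is irreducible, its linear part is abelian, and it admits the set of generators $g_1,\ldots,g_{p^s}$ with $g_1\circ\cdots\circ g_{p^s}=\Id$. The corollary then yields that $G$ is finite cyclic of order $p^\ell$ for some $\ell\le s$. To obtain the asserted analytic linearizability, I invoke the classical Bochner averaging argument: a finite subgroup of $\Diff(\mathbb{C}^n,0)$ is analytically conjugate to a finite subgroup of $\GL(n,\mathbb{C})$, obtained by averaging a local chart over $G$ to construct invariant holomorphic coordinates. A finite cyclic linear group is generated by a matrix of finite order, hence diagonalizable, giving the diagonal model and completing the proof.

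The main obstacle is the second paragraph, namely certifying that the meridian generators simultaneously carry the two features needed to match the corollary: mutual conjugacy, which comes from the \emph{irreducibility} of $C$, and the single product relation $\mu_1\cdots\mu_d=1$ with exactly $d=p^s$ factors, which comes from the generic line section. Care is required because the Zariski--van Kampen presentation involves the geometry of the pencil (base point, non-generic fibers); the point to verify is that, after discarding the extra braid-type relations, the specific relation $\mu_1\cdots\mu_d=1$ among $p^s$ pairwise conjugate meridians survives in $\pi_1(\mathbb{P}^2\setminus C)$, so that its holonomy image furnishes precisely a basic set of generators of $G$.
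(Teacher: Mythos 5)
Your proposal is correct and follows essentially the same route as the paper's proof: both exhibit the holonomy group as irreducible with $p^s$ pairwise-conjugate meridian generators satisfying the product relation coming from a generic line section (the paper cites the Lefschetz hyperplane theorem and Deligne-type conjugacy of meridians, you cite Zariski--van Kampen, which is the same toolkit), and then invoke the dimension-$n$ finiteness result using the abelian linear part hypothesis. The only cosmetic difference is that you route through Corollary~\ref{Corollary:irreducibledimn} and add a Bochner averaging step for linearizability, whereas the paper applies Theorem~\ref{Theorem:irreducibledimn} directly, whose conclusion already contains the analytic linearization to a diagonal cyclic group.
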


This result applies to the study of Riccati foliations in a general setting (cf. \S~\ref{section:Riccati}).

Given a subgroup $G\subset \Diff(\mathbb C^n,0)$ with a finite set of generators $f_1,\ldots,f_r$, by an {\it analytic deformation} of $G$ we shall mean a family $\{G_t\}_{t \in D}$
of subgroups $G_t\subset \Diff(\mathbb C^n,0)$, parametrized by $t \in D\subset \mathbb C$, where each $G_t$ is generated by maps $f_{j,t}\in \Diff(\mathbb C^n,0)$, depending analytically on $t$,  of the form
$f_{j,t}=f_j + \sum\limits_{k=1}^\infty a_{j,k}t^k$ where each $a_{j,k}$ is holomorphic
with a zero of order $\geq 2$ at the origin (i.e., the linear part of $a_{j,k}$ at the origin is zero) for all $j,k$.

Then we can state the following stability theorem for groups of germs of complex diffeomorphisms:

\begin{Theorem}
\label{Theorem:stability}{\rm
Let $G\subset \Diff(\mathbb C^n,0)$ be a cyclic finite subgroup of order $p^{s}$ for some prime number $p$ and $s\in \mathbb N$. Given an analytic deformation $\{G_t\}_{t \in D}$ of $G$ we have the following equivalences:
\begin{enumerate}
\item $G_t$ is irreducible for all $t$ close to $0$.

\item $G_t$ is finite cyclic for all $t$ close to $0$.

\end{enumerate}

Furthermore, if $G$ is trivial then any analytic deformation $\{G_t\}$ of $G$ by irreducible groups $G_t$ is such that $G_t$ is trivial for all $t$ close to $0$.
}
\end{Theorem}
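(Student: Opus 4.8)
The plan is to prove the two implications separately and then obtain the trivial case as a refinement of the same argument. The implication $(2)\Rightarrow(1)$ is immediate from the remark in the introduction that every finite cyclic subgroup of $\Diff(\co^n,0)$ is irreducible; hence if $G_t$ is finite cyclic for all small $t$, it is irreducible for all small $t$. All the content lies in $(1)\Rightarrow(2)$, which I would first reduce to a statement about the linear part. Write $G=\langle g\rangle$ with $g$ of order $p^{s}$ and set $A=Dg(0)$, so that $A^{p^{s}}=\Id$ and the eigenvalues of $A$ are $p^{s}$-th roots of unity, in particular roots of unity of $p$-power order. Whatever generating set $f_1,\dots,f_r$ of $G$ underlies the deformation, each $f_i$ is a power of $g$, so $Df_i(0)$ is a power of $A$. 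Since by definition every deformation term $a_{j,k}$ vanishes to order $\geq 2$ at $0$, the linear part is frozen along the deformation, $Df_{i,t}(0)=Df_i(0)$ for all $t$; consequently the linear part of $G_t$ is a subgroup of the cyclic group $\langle A\rangle$, and in particular abelian, for every $t\in D$.

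Now fix a small $t$ and use the hypothesis that $G_t$ is irreducible. This furnishes a basic set of generators $h_1,\dots,h_{m+1}$ of $G_t$ with $h_1\circ\cdots\circ h_{m+1}=\Id$ and the $h_i$ mutually conjugate in $G_t$. Since $f\mapsto Df(0)$ is a group homomorphism (every germ fixes $0$), the matrices $Dh_i(0)$ are mutually conjugate inside the \emph{abelian} linear part of $G_t$, hence all equal to a single $B\in\langle A\rangle$. Being a power of $A$, the matrix $B$ has eigenvalues that are again roots of unity of $p$-power order. Thus $\{h_1,\dots,h_{m+1}\}$ is a basic set of generators of $G_t$ with common linear part $B$ satisfying the arithmetic hypothesis of Theorem~\ref{Theorem:irreducibledimn}, which then yields that $G_t$ is finite cyclic; as $t$ is an arbitrary small parameter, $(1)\Rightarrow(2)$ follows. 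I expect the crux, and the main obstacle, to be precisely this step: after perturbation the relation $g_{1,t}\circ\cdots\circ g_{\nu+1,t}=\Id$ is destroyed, so the deformed generators themselves need not form a basic set and Theorem~\ref{Theorem:irreducibledimn} cannot be applied to them directly. One must instead extract a basic set from the abstract irreducibility of $G_t$ and then argue that this a priori uncontrolled set still has constant linear part. It is exactly here that the abelianness of the linear part, forced by the deformation hypothesis together with the cyclicity of $G$, does the work, collapsing the conjugacy of the $h_i$ to equality of their linear parts.

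Finally, for the trivial case I would run the same argument with $A=\Id$. Then the frozen linear part of every $G_t$ is $\{\Id\}$, so an irreducible $G_t$ is, by the final assertion of Theorem~\ref{Theorem:irreducibledimn}, analytically conjugate to a diagonal cyclic group $\langle\diag(\xi_1,\dots,\xi_n)\rangle$ with each $\xi_j$ a root of unity of $p$-power order. But the linear part is preserved up to linear conjugacy, and that of $G_t$ is $\{\Id\}$, so the diagonal generator $\diag(\xi_1,\dots,\xi_n)$ must be conjugate to $\Id$, hence equal to $\Id$; therefore $\xi_j=1$ for all $j$ and $G_t=\{\Id\}$. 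This shows that an analytic deformation of the trivial group by irreducible groups is trivial for all $t$ close to $0$.
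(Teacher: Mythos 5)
Your proposal is correct and follows essentially the same route as the paper: the deformation hypothesis freezes the linear parts, cyclicity of $G$ makes the linear part of each $G_t$ abelian (a subgroup of $\langle Dg(0)\rangle$), conjugacy of a basic set of generators then collapses to equality of their linear parts with $p$-power eigenvalue orders, and Theorem~\ref{Theorem:irreducibledimn} concludes, with the trivial case handled by the same argument with $A=\Id$. In fact you make explicit (extracting a basic set from the abstract irreducibility of $G_t$ rather than from the deformed generators, whose defining relation is destroyed) precisely the step the paper's own proof glosses over when it asserts that the linear part of $G_t$ ``satisfies the conditions'' of that theorem.
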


As a sample of how our results apply we give:

\begin{Theorem}
\label{Theorem:Riccati3}{\rm
Let $\fa$ be  the foliation by levels  of a rational function $R
\colon \mathbb P^m \times \mathbb P^n \to \mathbb P^n$.
Assume that the  codimension one
component $\sigma_1\subset \sigma$ of the ramification set $\sigma\subset \mathbb P^m$ of $R$ is  empty or irreducible (not necessarily smooth nor normal crossing type) of degree $p^{s}$ for some prime number $p$ and some $s\in \mathbb N$. Let now $\{\fa_t\}_{t \in \mathbb D}$ be an analytic deformation of $\fa=\fa_0$ by
Riccati foliations on $\mathbb P^m \times \mathbb P^n$ leaving invariant the basis $\mathbb P^m \times \{0\}$ (for some point $0 \in \mathbb P^n$).  Then the global holonomy of  $\fa_t$ is finite cyclic for each $t$ close to $0$. In particular, the leaves of $\fa_t$ are closed in  $(\mathbb P^m \setminus \sigma_1(t)) \times \mathbb P^n$, i.e., $\lim(\fa_t)\subset \sigma_1(t) \times \mathbb P^n,$ for all $t$ close to $0$. If $R$ is the second projection $\mathbb P^m \times \mathbb P^n \to \mathbb P^n$ then $\fa_t$ is analytically conjugate to $\fa$ in $(\mathbb P^m \setminus \sigma(t)) \times \mathbb P^n$.}
\end{Theorem}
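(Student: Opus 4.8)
The plan is to recast the statement in the group-theoretic language of the earlier results and then invoke Corollary~\ref{Corollary:irreducibledimn} together with the stability Theorem~\ref{Theorem:stability}. First I would fix a generic fibre $\{x_0\}\times\bp^n$ with $x_0\notin\sigma_1$ and identify the global holonomy of $\fa_t$ with the monodromy of $\pi_1(\bp^m\setminus\sigma_1(t),x_0)$ acting on this fibre. Since each $\fa_t$ is Riccati and leaves the section $\bp^m\times\{0\}$ invariant, every holonomy map belongs to $\Aut(\bp^n)$ and fixes $0\in\bp^n$; taking germs at $0$ produces a subgroup $G_t\subset\Diff(\co^n,0)$. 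The decisive topological input is that for an irreducible hypersurface $\sigma_1(t)\subset\bp^m$ of degree $p^s$ the meridians $\gamma_1,\dots,\gamma_{p^s}$ about its local branches are mutually conjugate in $\pi_1(\bp^m\setminus\sigma_1(t))$ and satisfy $\gamma_1\cdots\gamma_{p^s}\simeq\mathrm{const}$ (Zariski--van Kampen); their holonomy images $\{g_1,\dots,g_{p^s}\}$ then form a basic set of generators of $G_t$ with $g_1\circ\cdots\circ g_{p^s}=\Id$ and all $g_j$ conjugate, so each $G_t$ is irreducible.

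Next I would control the linear part. As the deformation alters only terms of order $\ge 2$ along the invariant section, the linear holonomy of $\fa_t$ agrees with that of $\fa_0$, so $\{G_t\}$ is an analytic deformation of $G_0$ with $Dg_{j,t}(0)$ independent of $t$. For the level foliation $\fa_0$ the holonomy preserves the first integral, $R_{x_0}\circ h_\gamma=R_{x_0}$ with $R_{x_0}=R|_{\{x_0\}\times\bp^n}$; hence the linear holonomy must preserve the leading homogeneous part of $R_{x_0}$ at $0$. For an irreducible ramification locus of degree $p^s$ this leading part is a power of a single defining form, on which the monodromy acts by multiplication by $p^s$-th roots of unity, so the linear part of $G_0$ is abelian. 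With abelian linear part and the $p^s$ generators above, Corollary~\ref{Corollary:irreducibledimn} shows that $G_0$ is finite cyclic of order $p^\ell$ for some $\ell\le s$.

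Having $G_0$ finite cyclic of $p$-power order, I would apply Theorem~\ref{Theorem:stability} to the deformation $\{G_t\}$: each $G_t$ is irreducible by the first step, so the equivalence $(1)\Leftrightarrow(2)$ forces $G_t$ to be finite cyclic for all $t$ near $0$. Finiteness of the global holonomy means all holonomy orbits in the fibre are finite, so for every leaf $L$ of $\fa_t$ the projection $L\to\bp^m\setminus\sigma_1(t)$ is a finite covering; thus each leaf is closed in $(\bp^m\setminus\sigma_1(t))\times\bp^n$ and can accumulate only over the removed locus, giving $\lim(\fa_t)\subset\sigma_1(t)\times\bp^n$.

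In the case where $R$ is the second projection the level foliation $\fa_0$ is the horizontal product foliation, whose holonomy is trivial, so $G_0=\{e\}$; the last assertion of Theorem~\ref{Theorem:stability} then shows that a deformation of the trivial group by irreducible groups stays trivial, whence $G_t=\{e\}$ and $\fa_t$ is analytically conjugate to the product foliation over $\bp^m\setminus\sigma(t)$. I expect the main obstacle to be the second step: verifying both that the linear holonomy of $\fa_0$ is genuinely abelian and that it remains constant along the deformation, so that $\{G_t\}$ is an analytic deformation in the precise sense required. Both points demand a careful local analysis near the invariant section $\bp^m\times\{0\}$---of the ramification of $R_{x_0}$ at $0$ and of the transverse $1$-jet of the Riccati family $\fa_t$.
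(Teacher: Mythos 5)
Structurally, your proposal is the paper's own proof. The paper deduces Theorem~\ref{Theorem:Riccati3} by reduction to Theorem~\ref{Theorem:Riccati2}, using the Lefschetz hyperplane section theorem together with Deligne's theorem to produce exactly the generators you get from Zariski--van Kampen ($p^s$ meridians, pairwise conjugate, product trivial), then germifying the global holonomy at the fixed point $0$ of the invariant section (injectively, since holonomy maps lie in $\Aut(\bp^n)$), applying Theorem~\ref{Theorem:stability}, and finishing with the finite-orbit/covering argument for $\lim(\fa_t)\subset\sigma_1(t)\times\bp^n$ and the trivial-holonomy case when $R$ is the second projection. Your steps (1), (2), (4), (5), (6) play the same roles.

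The step that is genuinely flawed is your second one, where you argue that the linear part of $G_0$ is abelian. You claim that, because $\sigma_1$ is irreducible of degree $p^s$, ``the leading homogeneous part of $R_{x_0}$ at $0$ is a power of a single defining form, on which the monodromy acts by multiplication by $p^s$-th roots of unity.'' This conflates base data with fiber data: $\sigma_1$ is a hypersurface of $\bp^m$, while the leading jet of $R_{x_0}=R|_{\{x_0\}\times\bp^n}$ at $0\in\bp^n$ is a transverse (fiber-direction) object on which the degree of $\sigma_1$ imposes no constraint, and no root-of-unity action on that jet has actually been produced. The input you need (a base group to which Theorem~\ref{Theorem:stability} can be applied) comes more directly from the first integral, and this is what the paper implicitly uses in Theorem~\ref{Theorem:Riccati2}: every holonomy germ $h\in G_0$ satisfies $R_{x_0}\circ h=R_{x_0}$, and since $x_0\notin\sigma$ the fiber is transverse to $\fa_0$, so $R_{x_0}$ is a local biholomorphism at $0$ whenever $(x_0,0)$ is a regular point of $R$; then $h=\Id$, i.e.\ $G_0$ is trivial, and no appeal to Corollary~\ref{Corollary:irreducibledimn} is needed (when $R$ has a multiple fiber along the section one instead gets finiteness, since $G_0$ embeds in the deck group of the generically finite germ $R_{x_0}$). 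Finally, the concern you flag at the end is exactly right, but your phrase ``the deformation alters only terms of order $\ge 2$'' is an assumption, not a consequence of the stated hypotheses: the $t$-independence of the linear parts of the holonomy generators is built into the paper's definition of analytic deformation of groups, and the paper itself asserts this property of the holonomy family in Theorem~\ref{Theorem:Riccati2} without proof. So on that point your proposal and the paper share the same unclosed gap; your honesty in isolating it is the main place where your write-up diverges from (and is more candid than) the published argument.
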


We refer to \S\ref{section:Riccati} for the details.
Roughly speaking, the ramification set of $\fa$ is the set $\sigma\subset \mathbb P^m$ of base points $x\in \mathbb P^m$ for which the fiber $\{x\}\times \mathbb P^n$ is not transverse to $\fa$. In the above statement $\sigma(t)$ denotes the ramification set of $\fa_t$.

\section{Dimension $n$}
According to Definition~\ref{Definition:groupgen} we have:

\begin{Definition}[irreducible group]
\label{Definition:irreduciblegroup}
{\rm A subgroup $G \subset \Diff(\mathbb C^n,0)$ is {\it irreducible} if it admits a
finite {\it basic set of generators}
$f_1,f_2,\ldots,f_{\nu +1}\in G$ such that:
\begin{enumerate}
\item[(a)] $f_1\circ f_2\circ\cdots\circ f_{\nu+1}=\Id$.
\item[(b)] $f_i$ and $f_j$ are conjugate in $G$ for all $i,j$.
    \end{enumerate}}

\end{Definition}

In order to prove Theorem~\ref{Theorem:irreducibledimn} we will make use of  the  Taylor expansion. Given $f\in \Diff(\co^n,0)$, since $f(0)=0$, for all $Z\in\co^n$ close to $0$ we have:

\[
f(Z) =  Df(0)\cdot Z + \frac{1}{ 2} f''(0)\cdot Z^2 + \cdots + \frac{1}{p!}f^{(p)}(0)\cdot Z^p + \cdots
\]

where

\[
f^{(p)}(0)\cdot Z^p = \frac{\partial^p f}{\partial Z^p}(0) =  \frac{\partial}{\partial Z}\left(\frac{\partial^{p-1} f}{\partial Z^{p-1}}\right)(0) = \sum^{n}_{k_1,\ldots,k_p=1} \frac{\partial^p f}{\partial Z_{k_{r_{p}}}}(0)z_{k_{1}}\cdots z_{k_{p}}
\]

here $Z=(z_1,\ldots,z_n)$  and $\frac{\partial^p }{\partial Z_{k_{r_{p}}}} = \frac{\partial^p }{\partial{z_{k_1}}\partial{z_{k_2}}\cdots\partial{z_{k_p}}}$. We shall need the following
 expression for the $n$-th derivative of the function composition of  maps:

\begin{Lemma}\label{Leibnitzgeraln}{\rm
For any  $\varphi:\co^n\to\co$, $\psi:\co^n\to\co^n$ holomorphic and $m\in\na$, $m\geq3$ we have that
\begin{multline}\label{hipindn}
   \frac{\partial^m (\varphi\circ\psi)}{\partial{z_{r_m}}\cdots\partial{z_{r_1}}} = \sum\limits_{k_{1},\ldots ,k_m = 1}^{n}\frac{\partial^m
 \varphi(\psi)}{\partial Z_{k_{r_{m}}}}\frac{\partial\psi_{k_m}}{\partial z_{r_m}}\cdots \frac{\partial\psi_{k_1}}{\partial z_{r_1}} +
 \sum\limits_{k= 1}^{n}\frac{\partial\varphi(\psi)}{\partial z_{k}}\frac{\partial^m\psi_k}{\partial{z_{r_m}}\cdots\partial{z_{r_1}}} + \\
  + \sum\limits_{p =2}^{m-1}\sum\limits_{k_{1},\ldots, k_p = 1}^{n}\frac{\partial^p \varphi(\psi)}{\partial Z_{k_{r_p}}}\cdot R_{k_{r_p}}(\cdot)\hspace*{5cm}
\end{multline}

where $\psi_{k_j}$ is the $k_j$-th coordinate of $\psi$ and $R_{k_{r_p}}$ is a polynomial expression as
a function of the derivatives of $\psi$ from order $1$ to order $m-1$ and has no terms containing only derivatives of order $1$.}
\end{Lemma}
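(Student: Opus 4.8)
The plan is to prove the formula by induction on the order $m$ of differentiation, reorganizing the terms at each step according to the order $p$ of the derivative of $\varphi$ that occurs. First I would record the base of the induction. The ordinary chain rule gives
\[
\frac{\partial(\varphi\circ\psi)}{\partial z_{r_1}} = \sum_{k=1}^n \frac{\partial\varphi(\psi)}{\partial z_k}\,\frac{\partial\psi_k}{\partial z_{r_1}},
\]
and differentiating once more, applying the chain rule to $\frac{\partial\varphi(\psi)}{\partial z_k}$, produces the second-order identity
\[
\frac{\partial^2(\varphi\circ\psi)}{\partial z_{r_2}\partial z_{r_1}} = \sum_{k_1,k_2=1}^n \frac{\partial^2\varphi(\psi)}{\partial Z_{k_{r_2}}}\,\frac{\partial\psi_{k_2}}{\partial z_{r_2}}\,\frac{\partial\psi_{k_1}}{\partial z_{r_1}} + \sum_{k=1}^n \frac{\partial\varphi(\psi)}{\partial z_k}\,\frac{\partial^2\psi_k}{\partial z_{r_2}\partial z_{r_1}}.
\]
This is exactly \eqref{hipindn} with empty middle sum; differentiating it once more yields the case $m=3$, which serves as the genuine base case since the middle sum first appears there.

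For the inductive step I would assume \eqref{hipindn} for some $m\geq 3$ and apply $\partial/\partial z_{r_{m+1}}$ to both sides, treating the three sums separately. Differentiating the leading sum: the chain rule on the factor $\frac{\partial^m\varphi(\psi)}{\partial Z_{k_{r_m}}}$ raises it to order $m+1$ and gives the new leading term, while differentiating one of the first-order factors $\frac{\partial\psi_{k_j}}{\partial z_{r_j}}$ produces a second-order $\psi$-derivative and feeds the new middle sum at level $p=m$. Differentiating the last sum: the chain rule on $\frac{\partial\varphi(\psi)}{\partial z_k}$ contributes to the new middle sum at level $p=2$, whereas differentiating $\frac{\partial^m\psi_k}{\partial z_{r_m}\cdots\partial z_{r_1}}$ raises it to order $m+1$ and gives the new last term. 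Differentiating the middle sum: the chain rule on $\frac{\partial^p\varphi(\psi)}{\partial Z_{k_{r_p}}}$ raises the $\varphi$-derivative to order $p+1$ (landing in levels $3$ through $m$), while differentiating $R_{k_{r_p}}$ keeps the $\varphi$-derivative at order $p$ (levels $2$ through $m-1$). Collecting all contributions by the order $p$ of the $\varphi$-derivative gives \eqref{hipindn} for $m+1$, the new coefficients $R_{k_{r_p}}$ being read off as the corresponding polynomials in the derivatives of $\psi$ of orders $1$ through $m$.

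The main point, and the part demanding care, is the structural claim that each $R_{k_{r_p}}$ carries no monomial built solely from first-order derivatives of $\psi$; I would verify that this property is stable under the differentiation just described. By the inductive hypothesis, every monomial of $R_{k_{r_p}}$ contains a distinguished factor that is a $\psi$-derivative of order $\geq 2$. Applying $\partial/\partial z_{r_{m+1}}$ via the product rule replaces a single factor by its derivative, one order higher: if the differentiated factor is the distinguished one it becomes order $\geq 3$, and otherwise the distinguished factor survives untouched, so in either case a factor of order $\geq 2$ persists in every resulting monomial. The new monomials arising from the leading and last sums likewise carry a factor of order $\geq 2$ (the second-order $\psi$-derivative, respectively the order-$m$ derivative $\frac{\partial^m\psi_k}{\partial z_{r_m}\cdots\partial z_{r_1}}$ with $m\geq 2$), and those produced by the chain rule on $\frac{\partial^p\varphi(\psi)}{\partial Z_{k_{r_p}}}$ merely adjoin a first-order factor to an $R_{k_{r_p}}$ that already has the property. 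Thus the ``no pure first-order term'' condition propagates, closing the induction. I expect the only real difficulty to be the index bookkeeping; the genuine combinatorial content is entirely the stability of this condition under the product rule, which is what makes the coefficients $R_{k_{r_p}}$ usable in the normal-form arguments that follow.
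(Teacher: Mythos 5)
Your proof is correct and follows essentially the same route as the paper's: induction on $m$ anchored at the explicit order-$3$ chain-rule computation, with the inductive step carried out by applying $\partial/\partial z_{r_{m+1}}$ to the three sums and regrouping all contributions according to the order $p$ of the $\varphi$-derivative. If anything, your verification that the ``no monomial built solely from first-order derivatives'' property of $R_{k_{r_p}}$ is stable under the product rule is spelled out more carefully than in the paper, which settles that point with a brief appeal to the chain rule.
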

\begin{proof}
Applying $\frac{\partial}{\partial z_{r_1}}$ to $\varphi\circ\psi$, we have

\[
\frac{\partial (\varphi\circ\psi)}{\partial{z_{r_1}}} =
\sum\limits_{k = 1}^{n}\frac{\partial \varphi(\psi)}{\partial z_{k}}\frac{\partial\psi_{k}}{\partial z_{r_1}}
\]

applying $\frac{\partial}{\partial z_{r_2}}$ we have

\[
\frac{\partial^2 (\varphi\circ\psi)}{\partial{z_{r_2}}\partial{z_{r_1}}} =
\sum\limits_{k_1,k_2 = 1}^{n}\frac{\partial^2 \varphi(\psi)}{\partial z_{k_2}\partial z_{k_1}}\frac{\partial \psi_{k_2}}{\partial z_{r_2}}
\frac{\partial \psi_{k_1}}{\partial z_{r_1}} + \sum\limits_{k = 1}^{n}\frac{\partial \varphi(\psi)}{\partial z_{k}}
\frac{\partial^2 \psi_{k}}{\partial z_{r_2}\partial z_{r_1}}
\]

applying $\frac{\partial}{\partial z_{r_3}}$ we have

\begin{multline*}
  \frac{\partial^3 (\varphi\circ\psi)}{\partial{z_{r_3}}\partial{z_{r_2}}\partial{z_{r_1}}} =
\sum\limits_{k_1,k_2,k_3 = 1}^{n}\frac{\partial^3 \varphi(\psi)}{\partial z_{k_3}\partial z_{k_2}\partial z_{k_1}}
\frac{\partial \psi_{k_3}}{\partial z_{r_3}}\frac{\partial \psi_{k_2}}{\partial z_{r_2}}\frac{\partial \psi_{k_1}}{\partial z_{r_1}}
+ \sum\limits_{k = 1}^{n}\frac{\partial \varphi(\psi)}{\partial z_{k}}
\frac{\partial^3 \psi_{k}}{\partial z_{r_3}\partial z_{r_2}\partial z_{r_1}} + \\
  + \sum\limits_{k_1,k_2 = 1}^{n}\frac{\partial^2 \varphi(\psi)}{\partial z_{k_2}\partial z_{k_1}}
  \left[\frac{\partial^2 \psi_{k_2}}{\partial z_{r_3}\partial z_{r_2}}\frac{\partial \psi_{k_1}}{\partial z_{r_1}} +
    \frac{\partial^2 \psi_{k_2}}{\partial z_{r_3}\partial z_{r_1}}\frac{\partial \psi_{k_1}}{\partial z_{r_2}} +
    \frac{\partial^2 \psi_{k_2}}{\partial z_{r_2}\partial z_{r_1}}\frac{\partial \psi_{k_1}}{\partial z_{r_3}}\right]
\end{multline*}

then

\[
R_{k_{r_p}}(\cdot) = \frac{\partial^2 \psi_{k_2}}{\partial z_{r_3}\partial z_{r_2}}\frac{\partial \psi_{k_1}}{\partial z_{r_1}} +
    \frac{\partial^2 \psi_{k_2}}{\partial z_{r_3}\partial z_{r_1}}\frac{\partial \psi_{k_1}}{\partial z_{r_2}} +
    \frac{\partial^2 \psi_{k_2}}{\partial z_{r_2}\partial z_{r_1}}\frac{\partial \psi_{k_1}}{\partial z_{r_3}}.
\]

Thus $R_{k_{r_p}}(\cdot)$ is a polynomial expression as
a function of the derivatives of $\psi$ from order 1 and 2 and has no terms containing only derivatives of order 1.\\

Let us assume that equation (\ref{hipindn}) is satisfied for $m$ by showing that it is valid for $m+1$.
By the hypothesis of induction (\ref{hipindn}) is valid. Applying $\frac{\partial}{\partial{z_{r_{m+1}}}}$ to  (\ref{hipindn}) we have

\begin{multline*}
 \frac{\partial^{m + 1} (\varphi\circ\psi)}{\partial{z_{r_{m+1}}}\cdots\partial{z_{r_1}}} = \sum\limits_{k_1,\ldots, k_{m+1} = 1}^{n}\frac{\partial^{m +1}
 \varphi(\psi)}{\partial Z_{k_{r_{m+ 1}}}}\frac{\partial\psi_{k_{m+ 1}}}{\partial z_{r_{m+1}}}\cdots \frac{\partial\psi_{k_1}}{\partial z_{r_1}} +
 \sum\limits_{k= 1}^{n}\frac{\partial\varphi(\psi)}{\partial z_{k}}\frac{\partial^{m+1}\psi_k}{\partial{z_{r_{m+1}}}\cdots\partial{z_{r_1}}} +
  \\
  + \sum\limits_{k_1,\ldots, k_{m} = 1}^{n}\frac{\partial^m
 \varphi(\psi)}{\partial Z_{k_{r_{m}}}}\frac{\partial^2\psi_{k_m}}{\partial z_{r_{m+1}}\partial z_{r_m}}
\frac{\partial\psi_{k_{m-1}}}{\partial z_{r_{m-1}}}\cdots \frac{\partial\psi_{k_1}}{\partial z_{r_1}}
+ \sum\limits_{k_1,\ldots, k_{m} = 1}^{n}\frac{\partial^m
\varphi(\psi)}{\partial Z_{k_{r_{m}}}}\frac{\partial\psi_{k_m}}{\partial z_{r_m}}
\frac{\partial^2\psi_{k_{m-1}}}{\partial z_{r_{m+1}}\partial z_{r_{m-1}}}\cdots \frac{\partial\psi_{k_1}}{\partial z_{r_1}}\\
 +\cdots+
 \sum\limits_{k_1,\ldots, k_{m} = 1}^{n}\frac{\partial^m
 \varphi(\psi)}{\partial Z_{k_{r_{m}}}}\frac{\partial\psi_{k_m}}{\partial z_{r_m}}
 \frac{\partial\psi_{k_{m-1}}}{\partial z_{r_{m-1}}}\cdots \frac{\partial^2\psi_{k_1}}{\partial z_{r_{m+1}}\partial z_{r_1}}
 + \sum\limits_{p = 2}^{m - 1}\sum\limits_{k_1,\ldots, k_{p + 1} = 1}^{n}
 \frac{\partial^{p + 1} \varphi(\psi)}{\partial Z_{k_{r_{p+1}}}}\frac{\partial\psi_{k_{p+ 1}}}{\partial z_{r_{m+1}}}\cdot R_{k_m}(\cdot)\\
 + \sum\limits_{p =2}^{m-1}\sum\limits_{k_{1},\ldots, k_p = 1}^{n}\frac{\partial^p \varphi(\psi)}{\partial Z_{k_{r_p}}}
 \frac{\partial [R_{k_{r_p}}(\cdot)]}{\partial z_{r_{m+1}}} + \sum\limits_{k_1,k_2= 1}^{n}\frac{\partial^2\varphi(\psi)}{\partial z_{k_2}\partial z_{k_1}}\frac{\partial\psi_{k_2}}{\partial z_{r_{m+1}}}\frac{\partial^m\psi_{k_1}}{\partial{z_{r_m}}\cdots\partial{z_{r_1}}}.
  \end{multline*}

Notice that $R_{k_{r_p}}$ is a polynomial expression as
a function of the derivatives of $\psi$ from order 1 to order $m-1$ and has no terms containing only derivatives of order 1. Thus by the chain rule we have that $\frac{\partial [R_{k_{r_p}}(\cdot)]}{\partial z_{r_{m+1}}}$ is is a polynomial expression as
a function of the derivatives of $\psi$ from order 1 to order $m$ and has no terms containing only derivatives of order 1.
Putting  $\frac{\partial^p \varphi(\psi)}{\partial Z_{k_{r_p}}}$ in evidence in the above expression we conclude.
\end{proof}

The very first case in Theorem~\ref{Theorem:irreducibledimn} is the following:

\begin{Proposition}
\label{Proposition:groups n}{\rm
Let $ G\subset \Diff(\mathbb C^n,0)$ be an irreducible group such that $G$ has a generator tangent to identity. Then $ G=\{\Id\}$.}
\end{Proposition}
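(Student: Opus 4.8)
The plan is to pass to the ``associated graded'' of the filtration by order of tangency and to show that the leading term defines a group homomorphism into an abelian, torsion-free group, on which conditions (a) and (b) of Definition~\ref{Definition:irreduciblegroup} become incompatible unless every generator is the identity. The first step is to reduce to the case in which \emph{every} element of $G$ is tangent to the identity. Fix a basic set of generators $f_1,\ldots,f_{\nu+1}$ and suppose that one of them, say $f_k$, is tangent to the identity, i.e. $Df_k(0)=\Id$. Since $f\mapsto Df(0)$ is a homomorphism from $G$ into $\GL(n,\co)$ and condition (b) makes each $f_j$ conjugate to $f_k$ in $G$, the linear part $Df_j(0)$ is conjugate to $Df_k(0)=\Id$, hence equals $\Id$, for every $j$. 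As the generators all lie in the kernel of the linear-part homomorphism, so does all of $G$; thus every $g\in G$ satisfies $Dg(0)=\Id$.

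Next, arguing by contradiction, I would assume $G\neq\{\Id\}$ and set $d=\min\{\ord(g-\Id)\colon g\in G\setminus\{\Id\}\}\geq 2$. For each $g\in G$ write $g(Z)=Z+P_g(Z)+(\text{h.o.t.})$ and let $\Phi(g)$ be the homogeneous part of degree $d$ of $g-\Id$ (so that $\Phi(g)=0$ whenever $\ord(g-\Id)>d$). Using the Taylor expansion together with Lemma~\ref{Leibnitzgeraln} to control the degree-$d$ coefficients of a composition, I would check that whenever $g,h\in G$ both have order $\geq d$, the degree-$d$ part of $g\circ h-\Id$ equals the sum of the degree-$d$ parts of $g-\Id$ and of $h-\Id$; the point is that the lower-order cross terms produced by the chain rule all have degree $>d$ because each factor is tangent to the identity of order $\geq d$. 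Consequently $\Phi\colon G\to V_d$ is a group homomorphism, where $V_d$ denotes the finite-dimensional $\co$-vector space of homogeneous maps $\co^n\to\co^n$ of degree $d$, viewed as an abelian group under addition.

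Now I would exploit irreducibility. Since $V_d$ is abelian, conjugation is invisible to $\Phi$: if $f_i=g f_j g^{-1}$ with $g\in G$ then $\Phi(f_i)=\Phi(g)+\Phi(f_j)-\Phi(g)=\Phi(f_j)$, so condition (b) gives $\Phi(f_1)=\cdots=\Phi(f_{\nu+1})$. Applying $\Phi$ to the relation $f_1\circ\cdots\circ f_{\nu+1}=\Id$ of condition (a) yields $(\nu+1)\,\Phi(f_1)=0$ in $V_d$. As $V_d$ is torsion-free, $\Phi(f_j)=0$ for all $j$, i.e. every basic generator has order $\geq d+1$. Since $\ker\Phi$ is a subgroup of $G$ containing all the generators, we obtain $G=\ker\Phi$, which means that every nontrivial element of $G$ has order $\geq d+1$. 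This contradicts the choice of $d$, and therefore $G=\{\Id\}$.

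The main obstacle is the verification, in the second paragraph, that $\Phi$ is genuinely a homomorphism: one must show carefully, via Lemma~\ref{Leibnitzgeraln}, that under composition of order-$\geq d$ germs tangent to the identity the degree-$d$ leading parts add and that no lower-order interaction contaminates degree $d$. Once this additivity on the associated graded is established, the remainder is a formal consequence of conditions (a) and (b) together with the torsion-freeness of $V_d$.
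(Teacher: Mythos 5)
Your proof is correct, and its engine coincides with the paper's: for germs tangent to the identity, the lowest-degree nonlinear term is additive under composition, hence invisible to conjugation, and relation (a) then forces $(\nu+1)$ times it to vanish. The difference is in the packaging. The paper runs an induction on the order $m\geq 2$, applying the explicit higher-order chain rule (Lemma~\ref{Leibnitzgeraln}) to each conjugacy relation $f_{(i)}\circ h=h\circ f_{(j)}$ and equating $m$-th derivatives at $0$ to conclude that all generators share the same degree-$m$ homogeneous part $P_m$, which relation (a) then kills; triviality follows coefficient by coefficient. You instead fix the minimal tangency order $d$ over all of $G\setminus\{\Id\}$, organize the degree-$d$ parts into a homomorphism $\Phi\colon G\to V_d$, and contradict minimality. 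Your formulation buys two things: the additivity of $\Phi$ needs only elementary degree counting (the cross terms have degree at least $2d-1>d$), not the detailed remainder structure of Lemma~\ref{Leibnitzgeraln}; and since $d$ is a minimum over all of $G$, the conjugating elements $h$ automatically satisfy the order-$\geq d$ hypothesis --- a point the paper's induction treats loosely, since its induction hypothesis is stated only for the generators $f,g$ yet is invoked for the derivatives of $h$ appearing in the remainder terms $R_{k_{r_p}}$ (this is harmless, because elements tangent to the identity to order $\geq m$ form a subgroup, but your setup makes it automatic). Conversely, the paper's hands-on computation is of a piece with the rest of the paper: the homomorphism-to-an-abelian-group trick you use appears there only later, in the proof of Theorem~\ref{Theorem:irreducibledimn} (the morphisms $\varphi_{r,Q}$ and $\psi_Q$), and the coefficient bookkeeping set up in this proposition is reused in that linearization algorithm.
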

\begin{proof}
Since $ G$ is an irreducible group that has a generator tangent to identity there exists a finite set of generators $f_{(1)},f_{(2)},\ldots,f_{(\nu +1)}\in  G$ such that
\begin{enumerate}
\item[(a)] $f_{(1)}\circ f_{(2)}\circ\ldots\circ f_{(\nu+1)}=\Id$.
\item[(b)] $f_{(i)}$ and $f_{(j)}$ are conjugate in $ G$ for all $i,j$.
\item[(c)] $Df_{(r)}(0) = \Id$ for some $r$.
    \end{enumerate}
From (b) given $f_{(i)}\in G$ there is $h\in G$, such that $f_{(i)}\circ h(Z) = h\circ f_{(r)}(Z)$. Now from (c) we have
\[
Df_{(i)}(0)Dh(0) = Dh(0)Df_{(r)}(0) = Dh(0)\Id = Dh(0).
\]

Hence $Df_{(i)}(0) = \Id$ for all $i$ and consequently for all $g\in G$, $Dg(0) = \Id$.
Note that if $g(Z) = Z + P_{k}(Z) + h.o.t\in G$ then
\[
g^{(m)}(Z) = g\circ g\circ\cdots\circ g(Z) = Z + mP_k(Z) + h.o.t.
\]

Now for simplicity consider $f = f_{(i)}$ and $g = f_{(j)}$, we will show that
\[
\frac{\partial^m f(0)}{\partial{z_{r_m}}\cdots\partial{z_{r_1}}} = \frac{\partial^m g(0)}{\partial{z_{r_m}}\cdots\partial{z_{r_1}}} = 0\;\mbox{ for all }m\in\mathbb{N},\;m\geq 2.
\]

Indeed, we will prove for each coordinate of $f$ and $g$ using induction.
Now from (b) there is $h\in G$, such that $f\circ h(Z) = h\circ g(Z)$.
We consider $f_s:\co^n\to\co$ the $s$-coordinate of $f$ and $h_s:\co^n\to\co$ the $s$-coordinate of $h$.
Then $f_s\circ h = h_s\circ g$ applying $\frac{\partial}{\partial z_{r_1}}$ on both sides we have

\[
\sum\limits_{k = 1}^{n}\frac{\partial f_s(h)}{\partial z_{k}}\frac{\partial h_{k}}{\partial z_{r_1}} =
\sum\limits_{k = 1}^{n}\frac{\partial h_s(g)}{\partial z_{k}}\frac{\partial g_{k}}{\partial z_{r_1}}
\]

applying $\frac{\partial}{\partial z_{r_2}}$ on both sides we have

\begin{multline*}
  \sum\limits_{k_1,k_2 = 1}^{n}\frac{\partial^2 f_s(h)}{\partial z_{k_2}\partial z_{k_1}}\frac{\partial h_{k_2}}{\partial z_{r_2}}
\frac{\partial h_{k_1}}{\partial z_{r_1}} + \sum\limits_{k = 1}^{n}\frac{\partial f_s(h)}{\partial z_{k}}\frac{\partial^2 h_{k}}{\partial z_{r_2}\partial z_{r_1}} =\\
\sum\limits_{k_1,k_2 = 1}^{n}\frac{\partial^2 h_s(g)}{\partial z_{k_2}\partial z_{k_1}}\frac{\partial g_{k_2}}{\partial z_{r_2}}
\frac{\partial g_{k_1}}{\partial z_{r_1}} + \sum\limits_{k = 1}^{n}\frac{\partial h_s(g)}{\partial z_{k}}\frac{\partial^2 g_{k}}{\partial z_{r_2}\partial z_{r_1}}.
\end{multline*}

As $Df(0) = Dg(0) = Dh(0) = \Id$, we have that
\begin{itemize}
  \item $\frac{\partial f_s}{\partial z_k}(0) = 0$ for all $k\ne s$ and $\frac{\partial f_s}{\partial z_s}(0) = 1$,
  \item $\frac{\partial h_k}{\partial z_r}(0) = 0$ for all $k\ne r$ and $\frac{\partial h_r}{\partial z_r}(0) = 1$,
  \item $\frac{\partial g_k}{\partial z_r}(0) = 0$ for all $k\ne r$ and $\frac{\partial g_r}{\partial z_r}(0) = 1$.
\end{itemize}

Then

\[
\frac{\partial^2 f_s(0)}{\partial z_{r_2}\partial z_{r_1}} + \frac{\partial^2 h_s(0)}{\partial z_{r_2}\partial z_{r_1}} =
\frac{\partial^2 h_s(0)}{\partial z_{r_2}\partial z_{r_1}} + \frac{\partial^2 g_s(0)}{\partial z_{r_2}\partial z_{r_1}}
\]

thus

\[
\frac{\partial^2 f_s(0)}{\partial z_{r_2}\partial z_{r_1}} = \frac{\partial^2 g_s(0)}{\partial z_{r_2}\partial z_{r_1}}.
\]

Now using Taylor theorem we have that
\[
f_{(i)}(Z)= Z + P_2(Z) + h.o.t.,\mbox{ for all }1\leq i\leq \nu+1.
\]

so

\[
f_{(i)}^{(\nu +1)}(Z)= Z + (\nu +1)P_2(Z) + h.o.t.
\]

From (a) we have that $P_2(0) = 0$. Then
\[
\frac{\partial^2 f_s(0)}{\partial z_{r_2}\partial z_{r_1}} = \frac{\partial^2 g_s(0)}{\partial z_{r_2}\partial z_{r_1}} = 0.
\]

Now suppose the statement below is satisfied for $ 3\leq l <m$. We will  be showing that it is valid for $m$. Then

\[
\frac{\partial^l f(0)}{\partial{z_{r_l}}\cdots\partial{z_{r_1}}} = \frac{\partial^l g(0)}{\partial{z_{r_l}}\cdots\partial{z_{r_1}}} = 0.
\]

As $f_s\circ h = h_s\circ g$ we have

\[
\frac{\partial^m (f_s\circ h)}{\partial{z_{r_m}}\cdots\partial{z_{r_1}}} = \frac{\partial^m (h_s\circ g)}{\partial{z_{r_m}}\cdots\partial{z_{r_1}}} .
\]

From Lemma~\ref{Leibnitzgeraln} we have

\begin{equation*}
\sum\limits_{k_{1},\ldots, k_m = 1}^{n}\frac{\partial^m
 f_s(h)}{\partial Z_{k_{r_{m}}}}\frac{\partial h_{k_m}}{\partial z_{r_m}}\cdots \frac{\partial h_{k_1}}{\partial z_{r_1}} +
 \sum\limits_{k= 1}^{n}\frac{\partial f_s(h)}{\partial z_{k}}\frac{\partial^m h_k}{\partial{z_{r_m}}\cdots\partial{z_{r_1}}} +
\sum\limits_{p =2}^{m-1}\sum\limits_{k_{1},\ldots, k_p = 1}^{n}\frac{\partial^{p} f_s(h)}{\partial Z_{k_{r_p}}}\cdot R_{k_{r_p}}(\cdot)
 \end{equation*}

\[
\parallel
\]

\begin{equation*}
\sum\limits_{k_{1},\ldots, k_m = 1}^{n}\frac{\partial^m
 h_s(g)}{\partial Z_{k_{r_{m}}}}\frac{\partial g_{k_m}}{\partial z_{r_m}}\cdots \frac{\partial g_{k_1}}{\partial z_{r_1}} +
 \sum\limits_{k= 1}^{n}\frac{\partial h_s(g)}{\partial z_{k}}\frac{\partial^m g_k}{\partial{z_{r_m}}\cdots\partial{z_{r_1}}} +
\sum\limits_{p =2}^{m-1}\sum\limits_{k_{1},\ldots, k_p = 1}^{n}\frac{\partial^{p} h_s(g)}{\partial Z_{k_{r_p}}}\cdot R_{k_{r_p}}(\cdot).
 \end{equation*}

As $R_{k_{r_p}}$ is a polynomial expression as
a function of the derivatives of $f_s$ (respectively $h_s$)
from order 1 to order $m-1$ and has no terms containing only derivatives of order 1,
by the induction hypothesis when $Z = 0$ we have that $R_{k_{r_p}} (\cdot) = 0$. Now as $Df(0) = Dg(0) = Dh(0) = \Id$, we have that

\[
\frac{\partial^m f_s(0)}{\partial{z_{r_m}}\cdots\partial{z_{r_1}}} +
\frac{\partial^m h_s(0)}{\partial{z_{r_m}}\cdots\partial{z_{r_1}}} =
\frac{\partial^m h_s(0)}{\partial{z_{r_m}}\cdots\partial{z_{r_1}}} +
\frac{\partial^m g_s(0)}{\partial{z_{r_m}}\cdots\partial{z_{r_1}}}
\]

then
\[
\frac{\partial^m f_s(0)}{\partial{z_{r_m}}\cdots\partial{z_{r_1}}} =
\frac{\partial^m g_s(0)}{\partial{z_{r_m}}\cdots\partial{z_{r_1}}}.
\]

Now using Taylor development we have that
\[
f_{(i)}(Z)= Z + P_m(Z) + h.o.t.,\mbox{ for all }1\leq i\leq \nu+1
\]

so

\[
f_{(i)}^{(\nu +1)}(Z)= Z + (\nu +1)P_m(Z) + h.o.t.
\]

From (a) we have that $P_m(0) = 0$. Then
\[
\frac{\partial^m f_s(0)}{\partial{z_{r_m}}\cdots\partial{z_{r_1}}} =
\frac{\partial^m g_s(0)}{\partial{z_{r_m}}\cdots\partial{z_{r_1}}} = 0\;\mbox{ for all }m\in \mathbb{N}.
\]

Consequently
\[
f_{(i)}(Z)= Z,\mbox{ for all }1\leq i\leq \nu+1.
\]
therefore $ G = \{\Id\}$.
\end{proof}

Now let us investigate what happens when the linear part of the diffeomorphism
is different from the identity. We will now show the case where all the diffeomorphisms have the same linear part, for this we will use the definition of resonance (\cite{Ilyashenko-Yakovenko} and \cite{Bracci}):

\begin{Definition}{\rm
A {\it multiplicative resonance} between non zero complex numbers $\lambda_1,\ldots,\lambda_n$ is an identity of form
\begin{equation*}\label{resonance auto}
  \lambda_s = \lambda_1^{m_1}\cdots\lambda_n^{m_n}
\end{equation*}
where $s\in\{1,\ldots,n\}$, $m_1,\ldots,m_n\in\mathbb{N}$ and $m_1 + \cdots + m_n \geq 2$. The vector $M = (m_1,\ldots,m_n)\in \mathbb N^n$ is called
the {\it order} of resonance. For simplicity we can say that {\it $\lambda_s$ is resonant} and that ${\bf\lambda}^M = \lambda_1^{m_1}\cdots\lambda_n^{m_n}$.}
\end{Definition}

We are interested in matrices that have resonant eigenvalues and their relation with polynomial functions:

\begin{Definition}
{\rm
If $\lambda_s$ is a resonant eigenvalue with order of resonance $(m_1,\ldots,m_n)$ we call
\[
z_1^{m_1}\cdots z_n^{m_n}\cdot e_{s}
\]
a {\it resonant monomial}. Here $e_1 = (1,\ldots, 0),\ldots, e_n =(0, \ldots,1)$ defines the canonical basis of $\co^n$.}
\end{Definition}

With these definitions we have:

\begin{Theorem}[{\it Poincaré-Dulac normal form} \cite{Bracci}, \cite{Ilyashenko-Yakovenko}]
\label{Poincare-Dulac} {\rm
Let $f\in \Diff(\co^n,0)$ be  a germ of complex diffeomorphism. If $Df(0)$ is diagonalizable  then $f$ is formally conjugate to a formal series $\hat F= df(0) + \hat P_2 + \ldots$, where the $\hat P_j$  are complex polynomial made only of resonant monomials of $f$. In particular if $df(0)$ has no resonances then $f$ is formally linearizable.}
\end{Theorem}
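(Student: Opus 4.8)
The plan is to follow the classical inductive elimination of non-resonant terms, removing them one homogeneous degree at a time by conjugation with polynomial maps tangent to the identity. First I would reduce to the diagonal case: since $df(0)$ is diagonalizable, a preliminary linear change of coordinates conjugates $f$ to a germ whose linear part is $A=\diag(\lambda_1,\ldots,\lambda_n)$; this is harmless because a linear conjugation preserves the notion of resonant monomial. I then expand $f = A + \sum_{j\ge 2} f_j$, where each $f_j\colon\co^n\to\co^n$ is a vector-valued homogeneous polynomial of degree $j$, and I argue by induction on $m\ge 2$ that $f$ can be formally conjugated to a germ whose homogeneous parts $f_2,\ldots,f_m$ consist only of resonant monomials of $f$.

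For the inductive step, assume $f_2,\ldots,f_{m-1}$ are already purely resonant. I would conjugate by $H = \Id + h$, with $h$ a vector-valued homogeneous polynomial of degree $m$ to be chosen, and compute the degree-$m$ part of $g = H^{-1}\circ f \circ H$. The point is that, since $h$ has degree $m$, the conjugation leaves all homogeneous parts of degree $<m$ untouched and modifies the degree-$m$ part exactly by
\[
\tilde f_m = f_m + A\circ h - h\circ A .
\]
Writing $L_A(h):=A\circ h - h\circ A$ for the associated homological operator on the finite-dimensional space of degree-$m$ vector homogeneous polynomials, the task becomes to choose $h$ so that $\tilde f_m$ contains only resonant monomials, i.e. to solve $L_A(h) = -(\text{non-resonant part of } f_m)$.

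The heart of the argument is that $L_A$ is diagonal in the monomial basis $\{\,z^M e_s : |M|=m,\ 1\le s\le n\,\}$. A direct computation from $A(z^M e_s)=\lambda_s\, z^M e_s$ and $(z^M e_s)\circ A = \lambda^M z^M e_s$ gives
\[
L_A(z^M e_s) = (\lambda_s - \lambda^M)\, z^M e_s ,
\]
so the eigenvalue attached to $z^M e_s$ vanishes exactly when $\lambda_s=\lambda^M$, that is, precisely on the resonant monomials. Hence $\ker L_A$ is spanned by the resonant monomials of degree $m$, while $L_A$ is invertible on the span of the non-resonant ones; this lets me solve the homological equation and annihilate the non-resonant part of $f_m$ while keeping $f_2,\ldots,f_{m-1}$ resonant. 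Composing these conjugations over all $m$ produces a formal transformation $\hat H$, well defined as a formal power series because each step only alters terms of degree $\ge m$; the resulting $\hat F = df(0) + \hat P_2 + \cdots$ then has purely resonant $\hat P_j$. The final assertion is immediate: if $A$ has no resonances, every $L_A$ is invertible, each $f_m$ is removed completely, and $f$ is formally linearizable to $df(0)$.

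I expect the main obstacle to be bookkeeping rather than anything conceptual: one must check carefully that conjugation by $\Id+h$ with $\deg h=m$ indeed fixes all homogeneous parts of degree $<m$ and contributes to degree $m$ only through $L_A(h)$, since the contributions of $f_j(Z+h)$ for $2\le j\le m-1$ and of the expansion of $H^{-1}$ all land in degree $>m$. I would also emphasize that the conclusion is strictly \emph{formal}: the infinite composition of the maps $H$ need not converge, and the small-divisor behavior of the factors $\lambda_s-\lambda^M$ is exactly the reason no analytic statement is claimed at this stage.
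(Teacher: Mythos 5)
Your proof is correct: this is the classical homological-equation argument, with the key point being exactly the computation $L_A(z^M e_s)=(\lambda_s-\lambda^M)\,z^M e_s$, which shows the operator is diagonal on the monomial basis, has kernel spanned by the resonant monomials, and is invertible on the non-resonant ones, so each degree can be normalized without disturbing lower degrees. Note that the paper itself gives no proof of this statement---it is quoted as a known theorem from \cite{Bracci} and \cite{Ilyashenko-Yakovenko}---so your argument coincides with the standard proof in those references (including the correct caveat that the conclusion is only formal, since the composed conjugations need not converge).
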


Now we can finish our proof:
\begin{proof}[Proof of Theorem~\ref{Theorem:irreducibledimn}] In short, there exists  a finite set of generators $f_{(1)},f_{(2)},\ldots,f_{(\nu +1)}\in G$ such that
\begin{enumerate}
\item[(a)] $f_{(1)}\circ f_{(2)}\circ\ldots\circ f_{(\nu+1)}=\Id$.
\item[(b)] $f_{(i)}$ and $f_{(j)}$ are conjugate in $ G$ for all $i,j$.
\item[(c)] All maps $f_{(i)}$ have the same linear part, $Df_{(i)}(0) = A$, for all $i$.
    \end{enumerate}
If $A = \Id$ by Proposition~\ref{Proposition:groups n} $ G = \{\Id\}$ is a finite group. Suppose that $A \ne \Id$ from (a) we have that
$A^{\nu+1} = \Id$. An easy computation with Jordan blocks shows that $A$ is diagonalizable. Let us then assume that $A$ is already in diagonal form with  eigenvalues $\lambda_1,\ldots,\lambda_n$. Then
  $\lambda_j ^{\nu+1}=1$ and the eigenvalues of $A$ are in resonance. Now by Theorem~\ref{Poincare-Dulac} we can write $f_{(1)}$ in the normal form, that is,
there is $\varphi\in \widehat{\Diff}(\co^n,0)$ such that
\begin{equation}\label{decompor51}
\varphi\circ f_{(1)}\circ \varphi^{-1}(Z) = \widetilde{f}_{(1)}(Z) = (\lambda_1 z_1,\ldots,\lambda_n z_n) + \widehat{P}_2(Z) + \widehat{P}_3(Z) + \cdots
\end{equation}

where $\widehat{P}_k(Z)$ contains only resonant monomials with complex coefficients.\\

Now we take $\widetilde{ G} = \varphi\circ G\circ\varphi^{-1}$. Thus $\widetilde{G}$ is isomorphic to $ G$ and satisfies all properties of $ G$. Thus we consider $G$ as $\widetilde{ G}$, then we can write the generators as

$$f_j(Z)=\left(\lambda_1z_1+\dsum_{|Q|=k+1}a_{1,Q}^{(j)}Z^Q+h.o.t.,\ldots, \lambda_nz_n+\dsum_{|Q|=k+1}a_{n,Q}^{(j)}Z^Q+h.o.t.\right)$$

For $j = 1,\ldots,\nu +1$, and $f_1$ have the nonlinear part contains only resonant monomials with complex coefficients. Moreover if $g\in G$ then exists $\ell\in\mathbb{Z}$ such that
\[
Dg(0) = D\varphi(0)A^\ell(D\varphi(0))^{-1}
=\left(
  \begin{array}{cccc}
    \lambda_1^\ell & 0 &\cdots & 0\\
    0 & \lambda_2^\ell & \cdots & 0\\
    0 & \cdots & \ddots & 0\\
    0 & 0 & \cdots & \lambda_n^\ell
  \end{array}
\right).
\]

The idea is to show by a formal algorithm
that $G$ is formally linearizable.  For this, suppose that $G$ has no terms of order $k$,
we will prove that the same is true for the terms of order $k +1$. First, note that given $f, g\in\Diff_{k+1}(\co^n,0)\cap G$ we have
$$f(Z)=\left(a_1z_1+\dsum_{|Q|=k+1}a_{1,Q}Z^Q+h.o.t.,\ldots, a_nz_n+\dsum_{|Q|=k+1}a_{n,Q}Z^Q+h.o.t.\right)$$ and
$$g(Z)=\left(b_1z_1+\dsum_{|Q|=k+1}b_{1,Q}Z^Q+h.o.t.,\ldots, b_nz_n+\dsum_{|Q|=k+1}b_{n,Q}Z^Q+h.o.t.\right)$$

where $Q=(q_1,\ldots,q_n)\in\mathbb{N}^n$ with $|Q|=k+1$, $a_r = \lambda_r^m$ and $b_r = \lambda_r^l$ for all $r = 1,\ldots,n$ e some $m,l\in\mathbb{N}$. So we have

$$\begin{array}{l l}f\circ g(Z)&=\left( a_1b_1z_1+\dsum_{|Q|=k+1}\left[a_1b_{1,Q}+a_{1,Q}B^Q\right]Z^Q+h.o.t.\right.,\ldots,\\&\\&\;\;\;\;\;\;\left.a_nb_nz_n+
\dsum_{|Q|=k+1}\left[a_nb_{n,Q}+a_{n,Q}B^Q\right]Z^Q+h.o.t.\right)\end{array}$$

where $B^Q = b_1^{q_1}\cdots b_n^{q_n}$. We will study two cases:

\begin{Claim}{\rm
Let $Q\in \mathbb N^n$ be a order of resonance for some $\lambda_r$, $|Q| = k + 1$. Then $a_{r,Q}^{(j)} = 0$.}
\end{Claim}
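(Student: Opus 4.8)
The idea is to read off the degree $k+1$ coefficients from the two defining relations of irreducibility, using the induction hypothesis that every element of $G$ has trivial nonlinear terms of order $\le k$ (so its nonlinear part starts at order $k+1$). This hypothesis is what makes the composition behave linearly in the unknown coefficients: if $F,G\in G$ have diagonal linear parts and nonlinear parts starting at order $k+1$, then in $F\circ G=F(G(Z))$ the only order $k+1$ contributions come from the linear part of $F$ acting on the order $k+1$ part of $G$, and from the order $k+1$ part of $F$ evaluated on the linear part of $G$ (all cross terms involve lower order nonlinear terms, which vanish). Writing $\mu:=\lambda_1^{q_1}\cdots\lambda_n^{q_n}$ (so that $Q$ being resonant for $\lambda_r$ means exactly $\mu=\lambda_r$), the coefficient of $Z^Q e_r$ at order $k+1$ in $F\circ G$ equals $(\text{linear}_r F)\cdot(\text{coeff}_{r,Q}G)+(\text{coeff}_{r,Q}F)\cdot(\text{linear }G)^Q$, where $(\text{linear }G)^Q$ denotes the scalar $\prod_i d_i^{q_i}$ for $G$ having linear part $\mathrm{diag}(d_1,\dots,d_n)$.

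First I would iterate this composition rule over the basic set $f_{(1)},\dots,f_{(\nu+1)}$, all of which have linear part $A=\mathrm{diag}(\lambda_1,\dots,\lambda_n)$. Since the partial composite $f_{(m)}\circ\cdots\circ f_{(\nu+1)}$ has linear part $A^{\nu+2-m}$, an easy downward induction gives that the coefficient of $Z^Q e_r$ at order $k+1$ in $f_{(1)}\circ\cdots\circ f_{(\nu+1)}$ is
\[
c_{r,Q}^{(1)}=\sum_{j=1}^{\nu+1}\lambda_r^{\,j-1}\,\mu^{\,\nu+1-j}\,a_{r,Q}^{(j)}.
\]
By relation (a) this composite is $\Id$, so $c_{r,Q}^{(1)}=0$. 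When $Q$ is resonant for $\lambda_r$ we have $\mu=\lambda_r$, hence every summand carries the common factor $\lambda_r^{\,j-1}\lambda_r^{\,\nu+1-j}=\lambda_r^{\,\nu}$, and since $\lambda_r\neq0$ this collapses to
\[
\sum_{j=1}^{\nu+1}a_{r,Q}^{(j)}=0.
\]

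Next I would extract a second relation from conjugacy (b). Given $i,j$ there is $h\in G$ with $f_{(i)}\circ h=h\circ f_{(j)}$, and $h$ has linear part $A^{s}$ for some $s\in\mathbb Z$ (every element of $G$ has linear part a power of $A$). Matching the order $k+1$ coefficients of $Z^Q e_r$ on both sides via the composition rule yields
\[
\mu^{s}\,a_{r,Q}^{(i)}-\lambda_r^{s}\,a_{r,Q}^{(j)}=(\mu-\lambda_r)\,h_{r,Q},
\]
where $h_{r,Q}$ is the corresponding coefficient of $h$. For resonant $Q$ the right-hand side vanishes (since $\mu=\lambda_r$) and $\mu^{s}=\lambda_r^{s}$, so $\lambda_r^{s}\big(a_{r,Q}^{(i)}-a_{r,Q}^{(j)}\big)=0$, giving $a_{r,Q}^{(i)}=a_{r,Q}^{(j)}$ for all $i,j$. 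Combining this common value with the sum relation, $(\nu+1)\,a_{r,Q}^{(j)}=0$; as $\nu+1\neq0$ in $\co$ we conclude $a_{r,Q}^{(j)}=0$ for every $j$, which is the assertion of the Claim.

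I expect the main obstacle to be purely bookkeeping: establishing the iterated composition coefficient formula cleanly and verifying the geometric-type telescoping for $c_{r,Q}^{(1)}$, together with correctly tracking that the conjugating element $h$ contributes its linear part $A^{s}$ as the scalar $\mu^{s}$ on the monomial $Z^Q$. The conceptual heart is the observation that the resonance identity $\mu=\lambda_r$ simultaneously (i) trivializes the geometric weights in relation (a), forcing the coefficients to \emph{sum} to zero, and (ii) trivializes the homological factor $\mu-\lambda_r$ in relation (b), forcing the coefficients to be \emph{equal}; only together do these two force each resonant coefficient to vanish, which is exactly where irreducibility is used beyond the ordinary Poincar\'e--Dulac normalization.
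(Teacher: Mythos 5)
Your proposal is correct and takes essentially the same route as the paper: the paper simply packages your coefficient bookkeeping into the additive character $\varphi_{r,Q}(f)=a_{r,Q}/a_r$ on $(G,\circ)$, whose homomorphism property (valid precisely because of the resonance $\lambda^Q=\lambda_r$) yields your two relations at once --- conjugacy (b) gives $\varphi_{r,Q}(f_{(i)})=\varphi_{r,Q}(f_{(j)})$, i.e.\ equality of the coefficients, while relation (a) gives $\sum_j\varphi_{r,Q}(f_{(j)})=0$, i.e.\ your sum-to-zero identity. Both arguments then conclude identically from $(\nu+1)\,a_{r,Q}/\lambda_r=0$.
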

\begin{proof}[Proof of the claim]
As $Q\in\mathbb{N}^n$ is a order of resonance for $\lambda_r$, we have that $\lambda^Q = \lambda_r$. Then $B^Q = (\lambda^Q)^l = b_r$. Now
we define the following application

$$\begin{array}{c c c c} \varphi_{r,Q}:&G&\to& \co \\ &\left(a_1z_1+\dsum_{|Q|=k+1}a_{1,Q}Z^Q+h.o.t.,\ldots,a_nz_n+\dsum_{|Q|=k+1}a_{n,Q}Z^Q+h.o.t. \right)&\mapsto&\dfrac{a_{r,Q}}{a_r} \end{array}$$
which defines a morphism between $(G,\circ)$ and $(\co,+)$, indeed
$$\varphi_{r,Q}(f\circ g)=\dfrac{a_rb_{r,Q}+a_{r,Q}B^Q}{a_rb_r} = \dfrac{a_rb_{r,Q}+a_{r,Q}b_r}{a_rb_r} = \varphi_{r,Q}(f)+\varphi_{r,Q}(g)$$
Now for $i\neq j$ from (b) we have $h\in G$ such that
$$f_i\circ h=h\circ f_j$$
applying the morphism we have
$$\begin{array}{c} \varphi_{r,Q}(f_i\circ h)=\varphi_{r,Q}(h\circ f_j)\\ \\
\varphi_{r,Q}(f_i)+\varphi_{r,Q}(h)=\varphi_{r,Q}(h)+\varphi_{r,Q}(f_j)\\ \\
\varphi_{r,Q}(f_i)=\varphi_{r,Q}(f_j)\\
\\
\dfrac{a^{(i)}_{r,Q}}{\lambda_r}=\dfrac{a^{(j)}_{r,Q}}{\lambda_r}
\\
\\
a^{(i)}_{r,Q}=a^{(j)}_{r,Q}.
  \end{array}$$
  Lets denote $a_{r,Q}:=a^{(1)}_{r,Q}=\ldots=a^{(\nu+1)}_{r,Q}$.  Now from (a) we have
$$\begin{array}{c} \varphi_{r,Q}(f_1\circ f_2\circ\ldots\circ f_{\nu+1})=\varphi_{r,Q}(\Id)=0\\ \\
\varphi_{r,Q}(f_1)+\varphi_{r,Q}(f_2)+\ldots+\varphi_{r,Q}(f_{\nu+1})=0\\ \\
\dfrac{a^{(1)}_{r,Q}}{\lambda_r}+\dfrac{a^{(2)}_{r,Q}}{\lambda_r}+\ldots+\dfrac{a^{(\nu+1)}_{r,Q}}{\lambda_r}=0\\
\\
(\nu+1)\left(\dfrac{a_{r,Q}}{\lambda_r}\right)=0.
  \end{array}$$
  Therefore $a_{r,Q}=0$.
\end{proof}
\begin{Claim}{\rm
 If $Q$ is not the order of resonance of any $\lambda_r$, $|Q| = k +1$  then $a_{r,Q}^{(j)} = a_{r,Q}^{(1)} = 0$.}
\end{Claim}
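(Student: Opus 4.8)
The plan is to run the same ``formal algorithm'' as in Claim~1, but with the homomorphism $\varphi_{r,Q}$ replaced by a twisted version adapted to the non-resonant situation. The assertion $a_{r,Q}^{(1)}=0$ is immediate: since $f_1$ has been put in Poincar\'e--Dulac normal form its nonlinear part consists only of resonant monomials, while $Z^Q e_r$ is non-resonant by hypothesis. So the real content is $a_{r,Q}^{(j)}=0$ for every $j$. Writing $u(f):=a_{r,Q}(f)$ for the coefficient of $Z^Q e_r$ in the degree $k+1$ part of $f\in G$, and $A^{m(f)}$ for the linear part of $f$, the composition formula recalled just before the claim gives the twisted relation $u(f\circ g)=\lambda_r^{m(f)}u(g)+(\lambda^Q)^{m(g)}u(f)$, where $\lambda^Q=\lambda_1^{q_1}\cdots\lambda_n^{q_n}$.

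Next I would normalize this into a cocycle. Setting $F(f):=u(f)/\lambda_r^{m(f)}$ and $\omega:=\lambda_r/\lambda^Q$, the relation becomes $F(f\circ g)=\omega^{-m(g)}F(f)+F(g)$; that is, $F$ is a $1$-cocycle for the character $\chi(f)=\omega^{m(f)}$ of $G$. In the resonant case of Claim~1 one has $\omega=1$, $F$ is a genuine homomorphism, and conjugation leaves it invariant; the novelty here is that $Q$ non-resonant means exactly $\omega\neq 1$, so $F$ is only a crossed homomorphism. Condition (a), $f_1\circ\cdots\circ f_{\nu+1}=\Id$, expands via this cocycle rule into the single linear constraint
\[
\sum_{j=1}^{\nu+1}\omega^{j-1}F(f_j)=0 ,
\]
using $m(f_j)=1$ and $\omega^{\nu+1}=1$ (the latter because $\lambda_r^{\nu+1}=(\lambda^Q)^{\nu+1}=1$). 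The coboundaries are the cocycles $F(f)=(1-\chi(f)^{-1})c$, which on the generators all equal $(1-\omega^{-1})c$; hence proving $F\equiv 0$ amounts to showing that $F$ is a coboundary whose constant is $0$, and since $F(f_1)=u(f_1)/\lambda_r=0$ that constant is automatically $0$. Everything therefore reduces to the statement that $F(f_i)=F(f_j)$ for all $i,j$, i.e. that the non-resonant coefficients coincide across the generators.

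To get this I would feed in condition (b). Writing $f_i=h_i\,f_1\,h_i^{-1}$ with $h_i\in G$ and using $F(f_1)=0$, the cocycle identity yields $F(f_i)=\chi(h_i)(\omega^{-1}-1)F(h_i)$, expressing each $F(f_i)$ through the cocycle value of the conjugating word $h_i$; expanding $h_i$ as a word in $f_1,\ldots,f_{\nu+1}$ turns $F(h_i)$ into an explicit $\co$-linear combination of the $F(f_j)$ with coefficients that are powers of $\omega$, so (a) together with (b) becomes a homogeneous linear system in $F(f_2),\ldots,F(f_{\nu+1})$ (with $F(f_1)=0$). The main obstacle is precisely to show this system is non-degenerate, forcing all $F(f_j)=0$: in the resonant case conjugation acted trivially on $\varphi_{r,Q}$ and equality was automatic, whereas here one must genuinely exploit that $\omega$ is a \emph{non-trivial} root of unity on the finite cyclic linear-part group $\langle A\rangle$ (so that there is no nonzero $\langle A\rangle$-invariant, cohomologically non-trivial contribution) in order to eliminate the conjugator terms $F(h_i)$. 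Once $F\equiv 0$ is secured, $a_{r,Q}^{(j)}=0$ for all $j$; combined with Claim~1 this shows every generator has vanishing degree $k+1$ part, and the induction on $k$ then gives $f_j=A\cdot(\,\cdot\,)$ for all $j$, so that $G=\langle A\rangle$ is finite cyclic and diagonal.
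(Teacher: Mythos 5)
Your reduction is sound as far as it goes, and it is in fact the paper's own construction in different clothing: the character $\chi(f)=\omega^{m(f)}$ together with the cocycle $F$ is exactly equivalent to the paper's affine representation $\psi_Q\colon G\to \Aff(\co)^n$ (an affine map $w\mapsto \chi w+\beta$ is the same data as the pair $(\chi,\beta)$, and your twisted composition rule is the affine group law). Your observation that $a^{(1)}_{r,Q}=0$ because $f_1$ is in Poincar\'e--Dulac normal form, the expansion of condition (a), and the reduction of the whole claim to the single statement $F(f_1)=\cdots=F(f_{\nu+1})$ all match the paper's argument.

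The problem is that the step you label ``the main obstacle'' \emph{is} the claim, and you do not prove it. The paper closes exactly this gap by quoting Lemma~\ref{keylemma} (Cerveau--Loray \cite{CL}, p.~222): for an affine group generated by $h_i(z)=\eta z+\beta_i$ with $\eta$ a root of unity of order $l>1$, pairwise conjugacy of the $h_i$ \emph{inside that group} forces $\beta_1=\cdots=\beta_{\nu+1}$ if and only if $l$ does not have two distinct prime divisors; it is applied with $\eta=\lambda_r/\lambda^Q$ (nontrivial precisely because $Q$ is non-resonant, and of $p$-power order because every $\lambda_j$ has $p$-power order) and $\beta_i=a^{(i)}_{r,Q}/\lambda^Q$. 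Your proposed route to non-degeneracy cannot work as stated, for two reasons. First, the conjugators $h_i$ are arbitrary, unknown words in the generators, so your ``homogeneous linear system'' is not a fixed system whose determinant one can compute; one needs an argument valid for every possible choice of words, which is what the arithmetic in \cite{CL} provides. Second, and more seriously, your heuristic that it suffices to exploit $\omega\neq 1$ (via a cohomological vanishing on $\langle A\rangle$) is false: the desired conclusion genuinely fails when the order of $\eta$ has two distinct prime divisors, as the paper's own Examples show (eigenvalues of orders $2$ and $3$, $4$ and $3$, $4$ and $9$ produce irreducible, non-linearizable groups whose non-resonant coefficients $a^{(i)}_{r,Q}$ are not all equal). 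Note also that $H^1$-vanishing for finite groups is not available here, since $G$ is not yet known to be finite and $F$ does not factor through $\langle A\rangle$. So the prime-power hypothesis must enter at precisely this point, through something like Lemma~\ref{keylemma}; without it your proof is incomplete.
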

\begin{proof}[Proof of the claim]
Note that $f_1$ is written in its normal form, then $a_{r,Q}^{(1)} = 0$. Now we define the following application
$\psi_Q\colon G\to \Aff(\co)^n$,

\[
\psi_Q\left(a_1z_1+\dsum_{|Q|=k+1}a_{1,Q}Z^Q+h.o.t.,\ldots,a_nz_n+\dsum_{|Q|=k+1}a_{n,Q}Z^Q+h.o.t. \right)=
\]
\[
\left(\dfrac{a_1z_1+a_{1,Q}}{A^Q},\ldots,\dfrac{a_nz_n+a_{n,Q}}{A^Q}\right)
\]
where $A^Q = a_1^{q_1}\cdots a_n^{q_n}$, which defines a
morphism from  $(G,\circ)$ into $(\Aff(\co)^n,\circ)$. Indeed,
$$\psi_Q(f\circ g)=\left(\dfrac{a_1b_1z_1+a_1b_{1,Q}+a_{1,Q}B^Q}{A^QB^Q},\ldots,\dfrac{a_nb_nz_n+a_nb_{n,Q}+a_{n,Q}B^Q}{A^QB^Q}\right)$$
on the other hand
$$\begin{array}{r c l} \psi_Q(f)\circ\psi_Q(g)&=&\left(\dfrac{a_1\left(\dfrac{b_1z_1+b_{1,Q}}{B^Q}\right)+a_{1,Q}}{A^Q},\ldots,\dfrac{a_n\left(\dfrac{b_nz_n+b_{n,Q}}{B^Q}\right)
+a_{n,Q}}{A^Q}\right)\\ &&\\&=&\left(\dfrac{a_1b_1z_1+a_1b_{1,Q}+a_{1,Q}B^Q}{A^QB^Q},\ldots,\dfrac{a_nb_nz_n+a_nb_{n,Q}+a_{n,Q}B^Q}{A^QB^Q}\right)\\ &&\\&=&\psi_Q(f\circ g)\end{array}$$

Denote by $G_Q$ the image of $G$ by $\psi_Q$. Therefore $G_Q$  is an product affine group generated by the transformations $g^Q_{(1)},g^Q_{(2)},\ldots,g^Q_{(\nu+1)}$, pairwise  conjugate in $G_Q$, where
$$g^Q_{(i)}(Z)=\left(\dfrac{\lambda_1 z_1+a^{(i)}_{1,Q}}{\lambda_1^{q_1}\cdots \lambda_n^{q_n}},\ldots,\dfrac{\lambda_n z_n+a^{(i)}_{n,Q}}{\lambda_1^{q_1}\cdots \lambda_n^{q_n}}\right)=(g^Q_{i,1}(z_1),\ldots, g^Q_{i,n}(z_n)).$$

Denote by $\lambda^Q = \lambda_1^{q_1}\cdots \lambda_n^{q_n}$ and $G^Q_r$ is an affine group generated by the transformations $g^Q_{1,r},\ldots, g^Q_{\nu+1,r}$, pairwise conjugate in $G^Q_r$, where
$$g^Q_{i,r}(w)=\dfrac{\lambda_r w+a^{(i)}_{r,Q}}{\lambda^Q}.$$

We now apply  the following lemma whose proof is found in \cite{CL}
(page 222):

\begin{Lemma}\label{keylemma}{\rm Let $\eta$ be  a $l$-th root of the unit, $l>1$,
 $\beta_1,\beta_2,\ldots,\beta_{r+1}\in\co$ and $\Gamma$
 an affine group generated by the transformations
  $h_i(z)=\eta z+\beta_i$, $i=1,2,\ldots,r+1$.
Then the $h_i$'s are pairwise conjugate in $\Gamma$ if and  only if
either $l$ has two distinct prime divisors or $l=q^m$, for some
prime $q$ and some  $m\in\mathbb{N}^*$ and
$\beta_1=\beta_2=\ldots=\beta_{r+1}$.}
\end{Lemma}

Taking $\eta=\dfrac{\lambda_r}{\lambda^Q}$, $l=p^{s_1+\ldots+s_n}$, $r=\nu$  and
$\beta_i=\dfrac{a^{(i)}_{r,Q}}{\lambda^Q}$ ($i=1,\ldots,\nu+1$) by the Lemma \ref{keylemma} we have $p^{s_1+\ldots+s_n}=q^m$, $q$ prime, $m\in\mathbb{N}^*$ and
$$\dfrac{a^{(1)}_{r,Q}}{\lambda^Q}=\ldots=\dfrac{a^{(\nu+1)}_{r,Q}}{\lambda^Q}.$$
Therefore $q=p$, $m=s_1+\ldots+s_n$ and
$$a^{(1)}_{r,Q}=\ldots=a^{(\nu+1)}_{r,Q}.$$

\end{proof}
This produces a convergent algorithm in the Krull topology. This already shows that $G$ is formally linearizable. Since there is a set of generators all with the same linear part $A$ which is a finite order matrix, this implies that $G$ is cyclic generated by $A$. The proof of Theorem~\ref{Theorem:irreducibledimn} is now complete.
\end{proof}

\begin{proof}[Proof of Corollary~\ref{Corollary:irreducibledimn}]
Since the linear part of $G$ is abelian, given a basic set of   generators $f_j$ of $G$ in the definition of irreducible group, all the maps $f_j$ have the same linear part, say $A\in \GL(n,\mathbb C)$. In particular $A$ satisfies $A^{\nu+1}=\Id$. On the other hand, there is a set of generators $g_1,\ldots,g_{p^s}$ with $g_1\circ \cdots\circ g_{p^s}=\Id$. This implies that $A^{p^s}=\Id$. Since $p$ is prime this implies that the order of $A$ is $p^r$ for some $r\in \{0,\ldots,s\}$ and therefore $A$ satisfies the hypothesis of Theorem~\ref{Theorem:irreducibledimn}.
The group $G$ is therefore finite and cyclic.
\end{proof}

The hypothesis  that the eigenvalues of the linear part are roots of the unit of order  power of a {\em same prime  number}  cannot be dropped,  as shown in the following examples.

\begin{Example}{\rm  Let $G\subset \Diff(\mathbb{C}^2,0)$ be the subgroup generated
by the maps $f_1(Z)=f_2(Z)=f_3(Z)=f_4(Z)=(-z_1,\lambda z_2), f_5(Z)=(-z_1+z_2^2,\lambda z_2)$ and $f_6(Z)=(-z_1+\lambda^2z_2^2,\lambda z_2)$ where $\lambda^3=1$ so that $\lambda^2+\lambda+1=0$. Note that the generators have the same linear part with eigenvalues roots of order $2$ and $3$. We claim  that $G$ is irreducible and not finite (not linearizable). The first condition is satisfied
$$\begin{array}{r c l}f_1\circ f_2\circ f_3\circ
f_4\circ f_5\circ f_6(Z)&=&f_1\circ f_2\circ f_3\circ
f_4\circ f_5\left(-z_1+\lambda^2z_2^2,\lambda z_2\right)\\ &&\\&=&f_1\circ f_2\circ f_3\circ
f_4(z_1-\lambda^2z_2^2+\lambda^2z_2^2,\lambda^2z_2)\\&&\\&=&f_1\circ f_2\circ f_3\circ
f_4(z_1,\lambda^2z_2)=(z_1,\lambda^6z_2)=Z
 \end{array}$$
 To check the second condition take  $g_1(Z)=f_1^4\circ f_5\circ
f_1(Z)=(z_1+\lambda^2z_2^2,z_2)$ then  $g_1\in G$ and note that

$$f_1\circ g_1(Z)=f_1(z_1+\lambda^2z_2^2,z_2)=(-z_1-\lambda^2z_2^2,\lambda z_2) $$
$$g_1\circ f_5(Z)=g_1(-z_1+z_2^2,\lambda z_2)=(-z_1+z_2^2+\lambda^2(\lambda^2z_2^2),\lambda z_2)=(-z_1+(1+\lambda)z_2^2,\lambda z_2)$$ Since
$\lambda^2+\lambda+1=0$ we have $1+\lambda=-\lambda^2$ therefore $f_1\circ
g_1=g_1\circ f_5$.

Take $g_2(Z)=f_5\circ f_1^5(Z)=(z_1+\lambda z_2^2, z_2)$ then $g_2\in G$ and note that

$$f_1\circ g_2(Z)=f_1(z_1+\lambda z_2^2, z_2)=(-z_1-\lambda z_2^2,\lambda z_2)$$
$$g_2\circ f_6(Z)=g_2\left(-z_1+\lambda^2 z_2^2,\lambda z_2\right)=(-z_1+\lambda^2z_2^2+\lambda(\lambda^2z_2^2),\lambda z_2)=(-z_1+(\lambda^2+1)z_2^2,\lambda z_2)$$
Since
$\lambda^2+\lambda+1=0$ we have $1+\lambda^2=-\lambda$ therefore $f_1\circ g_2=g_2\circ f_6$.

Take $g_3(Z)=f_5^2\circ f_1^4(Z)=(z_1+(\lambda-\lambda^2)z_2^2,z_2)$ then $g_3\in G$ and note that

$$f_5\circ g_3(Z)=f_5\left(z_1+(\lambda-\lambda^2)z_2^2,z_2\right)=(-z_1+(\lambda^2-\lambda)z_2^2+z_2^2,\lambda z_2)=(-z_1+(\lambda^2-\lambda+1)z_2^2,\lambda z_2)$$
$$g_3\circ f_6(Z)=g_3\left(-z_1+\lambda^2z_2^2,\lambda z_2\right)=(-z_1+\lambda^2 z_2^2-(\lambda^2-\lambda)(\lambda^2z_2^2),\lambda z_2)=(-z_1+(\lambda^2-\lambda+1)z_2^2,\lambda z_2)$$
therefore $f_5\circ g_3=g_3\circ f_6$.

Consequently the $f_j$ are pairwise conjugate in the group $G$. Note that also $g_1^n(Z)=(z_1+n\lambda^2z_2^2,z_2)\neq Z$ for all $n\in\mathbb{N}^*$. Therefore $G$ it is not finite. Finally, we observe that $G$ is not abelian and in particular, it is not analytically linearizable: indeed, if $g$ linearizes $G$,  then $g\circ f_1\circ g^{-1}= f_1$. On the other hand $g\circ f_5\circ
g^{-1}=f_1$, so $$f_5=g^{-1}\circ f_1\circ g = f_1$$ which is a contradiction.}
\end{Example}

\begin{Example} {\rm Consider $G\subset \Diff(\mathbb{C}^2,0)$ the subgroup generated
by the maps
$$f_1(Z)=\ldots=f_{10}(Z)=(iz_1,\lambda z_2), f_{11}(Z)=(iz_1,\lambda z_2+z_1^2)\;\;\mbox{ and }\;\;f_{12}(Z)=(iz_1,\lambda z_2+\lambda^2 z_1^2)$$where $\lambda^3=1$ so that $\lambda^2+\lambda+1=0$.
Note that the generators have the same linear part with eigenvalues roots of order 4 and 3. $G$ is irreducible and not finite (not linearizable).}
\end{Example}

\begin{Example}{\rm Consider $G\subset \Diff(\mathbb{C}^2,0)$ the subgroup generated
by the maps
$$f_1(Z)=\ldots=f_{34}(Z)=(-z_1,\lambda z_2), f_{35}(Z)=(-z_1+z_2^3,\lambda z_2)\;\;\mbox{ and }\;\;f_{36}(Z)=(-z_1+\lambda^3 z_2^3,\lambda z_2)$$where $\lambda^9=1$ so that $\lambda^6+\lambda^3+1=0$.
Note that the generators have the same linear part with eigenvalues roots of order 4 and 9. $G$ is irreducible and not finite (not linearizable).}
\end{Example}

\section{Applications}

As a first application we prove
\begin{proof}[Proof of Theorem~\ref{Theorem:leafholonomy}]
By hypothesis the linear part of the holonomy group
$\Hol(L_0)\hookrightarrow \Diff(\mathbb C^n,0)$ is abelian.
By Deligne's theorem  the
 fundamental group $\pi_1(L_0)$ is irreducible. Indeed, it is generated by
 a small simple loop $\gamma$ around $C$ and its conjugagy homotopy classes $\gamma_j, j \in J$. Choose
 a linear embedding $\ell \colon \mathbb P^1 \to \mathbb P^2$ in general position with respect to $C$. This means that $\ell(\mathbb P^1)$ meets $C$ transversely and only at nonsingular points. In particular the intersection $\ell(\mathbb P^1)\cap C$ is a set of
 $\nu+1=p^s$ points say $\{p_1,\ldots,p_{\nu+1}\}$. Given a base point $p_0\in \ell^{-1}(\mathbb P^2\setminus\{p_1,\ldots,p_{\nu+1}\})$ by Lefechetz hyperplane section theorem, there is a surjective morphism $\pi_1(\ell^{-1}(\mathbb P^2\setminus\{p_1,\ldots,p_{\nu+1}\}),p_0) \to \pi_1(\mathbb P^2 \setminus C)$. Thus we may take the small loop $\gamma=\gamma_1$ contained in a small disc in $\ell(\mathbb P^1)$ centered at $p_1$ and the other homotopy classes as given by small loops $\gamma_j$ contained in small discs in $\ell(\mathbb P^1)$ and centered at the points $p_j, j =2,\ldots,\nu+1$. In particular, $\pi(L_0)$ is irreducible with a set of generators $[\gamma_1],\ldots,[\gamma_{\nu+1}]$ as in Definition~\ref{Definition:groupgen}.
 The corresponding holonomy maps $f_{[\gamma_j]}\in \Hol(\fa, L_0)$ are such that
 $f_{[\gamma_1]},\ldots,f_{[\gamma_{\nu+1}]}$ is a set of generators for  holonomy
group as an irreducible  subgroup of $\Diff(\mathbb C^n,0)$. By hypothesis this group has  an abelian linear part. Since $\nu+1=p^s$,  by Theorem~\ref{Theorem:irreducibledimn} this holonomy group is finite.

\end{proof}

%The point now is that the holonomy group $H(\fa,L_0)$ identified with a subgroup of %$\Diff(\mathbb C^n,0)$, is not necessarily cyclic.
\begin{proof}[Proof of Theorem~\ref{Theorem:stability}]
If $G_t$ is finite and cyclic then it is irreducible. Thus we shall prove that (1) implies (2). Assume that $G_t$ is irreducible for all $t$ close to $0$. By hypothesis $G_t$ is generated by
the maps $f_{j,t}$ above. If $G$ is trivial then clearly any map $f_{j,t}$ is tangent to the identity. In this case, by Theorem~\ref{Theorem:irreducibledimn} the group $G$ is also trivial. Assume now that $s>0$. Since $G=G_0$ is cyclic of order $p^s$, any set of non-trivial generators $\{f_j,j=1,\ldots,r\}$ is of the form
$f_j=f^{n_j}$ for some $n_j \in \{1,\ldots,p^s-1\}$, where $f$ is a generator of $G$ as a cyclic group.
Thus we have $f_{j,t}^\prime(0)=(f^\prime(0))^{n_j}$ and therefore the linear part of the group $G_t$ satisfies the conditions of Theorem~\ref{Theorem:irreducibledimn}. By this same theorem the group $G_t$ is finite cyclic.
\end{proof}

\begin{Definition}
{\rm
Given a map germ $f \in \Diff(\mathbb C^n,0)$ and a hypersurface germ ${H} \subset \mathbb C^n$ through the origin $ 0 \in \mathbb C^n$  we say that ${H}$  is {\it f-invariant at order $k\in \mathbb N$}   if:
 \begin{enumerate}
 \item $f(H)\subset H$.
  \item We have $f^{k}\big|_{H}=\Id$.
  \end{enumerate}

 We shall also say that $H$  is {\it infinitesimally f-invariant at order
  $k\in \mathbb N$ } if:
 \begin{enumerate}
 \item the tangent space $T_0({H})\subset \mathbb C^n_0$  is invariant by the derivative  $f^\prime(0)$, i.e.,
$f^\prime(0) \cdot T_0({H}) =T_0({H})\subset \mathbb C^n_0$.
 \item We have $f^{k}\big|_{H}=\Id$.
  \end{enumerate}
  }

  \end{Definition}

Clearly, if $H$ is $f$-invariant at order $k$ then it is $f$-infinitesimally invariant at order $k$.

\begin{Corollary}{\rm Let $ G\subset \Diff(\mathbb C^n,0)$ be an irreducible group, $p\in \mathbb N$ a prime number. Assume that there are analytic hypersurface germs ${H}_1,\ldots,{H}_n\subset \mathbb C^n$ meeting transversely at the origin such that each $H_j$ is  infinitesimally invariant at order $p^{s_j}$ by each element of $G$. Then $ G$ is a finite group.}
\end{Corollary}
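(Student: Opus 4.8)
The plan is to reduce the statement to Theorem~\ref{Theorem:irreducibledimn} by showing that the two hypotheses in the definition of infinitesimal invariance force the linear part of $G$ to be a simultaneously diagonalizable (hence abelian) group whose eigenvalues are roots of unity of order a power of $p$.

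First I would exploit the transversality of $H_1,\ldots,H_n$. Since $n$ hypersurface germs meet transversely at $0\in\mathbb C^n$, their tangent hyperplanes are in general position, so $\bigcap_{j=1}^n T_0(H_j)=\{0\}$; equivalently their conormals are linearly independent. Hence, after a linear change of coordinates, which is an analytic conjugation of $G$ and preserves all the hypotheses, we may assume $T_0(H_j)=\{z_j=0\}$ for each $j$. Condition (1) in the definition of infinitesimal invariance says that $Df(0)$ preserves each $T_0(H_j)$ for every $f\in G$. Because each coordinate axis satisfies $\mathbb C\,e_k=\bigcap_{j\ne k}T_0(H_j)$, it is then also $Df(0)$-invariant, so every $Df(0)$ is \emph{diagonal} in these coordinates, say $Df(0)=\diag(\lambda_1,\ldots,\lambda_n)$ with $\lambda_i$ depending on $f$. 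In particular the linear part of $G$ is abelian.

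Next I would read off the arithmetic constraint from condition (2). Fix $f\in G$ and write $Df(0)=\diag(\lambda_1,\ldots,\lambda_n)$. Differentiating $f^{p^{s_j}}\big|_{H_j}=\Id$ at the origin gives $D(f^{p^{s_j}})(0)=\bigl(Df(0)\bigr)^{p^{s_j}}=\diag(\lambda_1^{p^{s_j}},\ldots,\lambda_n^{p^{s_j}})$, and restricting to $T_0(H_j)=\{z_j=0\}=\operatorname{span}(e_i:i\ne j)$ forces $\lambda_i^{p^{s_j}}=1$ for every $i\ne j$. Assuming $n\ge 2$, for each index $i$ I may choose some $j\ne i$, whence $\lambda_i^{p^{s_j}}=1$ and $\lambda_i$ is a root of unity of order $p^{r_i}$ for some $r_i\in\mathbb N$. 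Thus every eigenvalue of every $Df(0)$ is a root of unity of order a power of the same prime $p$.

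Finally I would combine irreducibility with Theorem~\ref{Theorem:irreducibledimn}. Let $\{f_1,\ldots,f_{\nu+1}\}$ be a basic set of generators. Since the $f_i$ are pairwise conjugate in $G$ while the linear part of $G$ is abelian, the diagonal matrices $Df_i(0)$ are pairwise conjugate inside an abelian group and hence equal, exactly as in the proof of Corollary~\ref{Corollary:irreducibledimn}; call the common value $A=\diag(\lambda_1,\ldots,\lambda_n)$. By the previous paragraph each $\lambda_i$ is a root of unity of order a power of $p$, so $G$, its basic generators, and $A$ satisfy all the hypotheses of Theorem~\ref{Theorem:irreducibledimn}, which yields that $G$ is finite cyclic, a statement stronger than the asserted finiteness. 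The main obstacle is the first step, namely converting the geometric transversality hypothesis into simultaneous diagonalizability of the entire linear part; once the coordinates are normalized so that the $T_0(H_j)$ are the coordinate hyperplanes, the rest is bookkeeping with diagonal matrices. The degenerate case $n=1$ (where a hypersurface in $\mathbb C$ is the point $0$ and condition (2) is vacuous) is covered by the one-dimensional theory of \cite{VMB}.
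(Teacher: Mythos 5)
Your proposal is correct and takes essentially the same route as the paper's own proof: normalize coordinates so that $T_0(H_j)=\{z_j=0\}$, deduce that every element of $G$ has diagonal (hence abelian) linear part whose eigenvalues are roots of unity of order a power of $p$, use pairwise conjugacy of a basic set of generators together with commutativity of diagonal matrices to conclude they share a common linear part $A$, and invoke Theorem~\ref{Theorem:irreducibledimn}. If anything you are more precise than the paper at one step: restricting $f^{p^{s_j}}$ to $H_j$ only controls the eigenvalues $\lambda_i$ with $i\ne j$ (which is why you need $n\ge 2$ and a choice of $j\ne i$), whereas the paper writes $\lambda_{ji}^{p^{s_i}}=1$ directly, an indexing slip that your argument repairs.
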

 \begin{proof}
 Up to a change of coordinates we may assume that $H_j : \{z_j=0\}, \, j=1,\ldots,n$.
 Thus, $G$ admits a finite set of generators $f_1,f_2,\ldots,f_{\nu+1}\in  G$ such that:
\begin{enumerate}
\item[(a)] $f_1\circ f_2\circ\cdots\circ f_{\nu+1}=\Id$.
\item[(b)] $f_i$ and $f_j$ are conjugate in $ G$ for all $i,j$.
\end{enumerate}
By hypothesis for each $i,j$ we have $f_j^\prime(0)\cdot T_0(H_i)\subset T_0(H_i)$. This implies
 that
 \begin{enumerate}
\item[(c)] For each $j=1,\ldots,n$ we have $$f_j(Z)= T_jZ + P_{j2}(Z) +\cdots +P_{jk}(Z) + \cdots$$

where $P_{jk}$ is homogeneous of degree $k\geq 2$ and

\[
T_j=Df_j(0)
=\left(
  \begin{array}{cccc}
   \lambda_{j1} & 0 &\cdots & 0\\
    0 & \lambda_{j2} & \cdots & 0\\
   0 & \cdots & \ddots & 0\\
  0 & 0 & \cdots & \lambda_{jn}
  \end{array}
\right).
\]
\end{enumerate}

Also by hypothesis we have $f_j^{p^{s_i}}\big|_{H_i}=\Id$, for all $j=1,\ldots,\nu+1$. This implies
\begin{enumerate}
\item[(d)] $\lambda_{ji}^{p^{s_i}}=1$ for each $j\in \{1,\ldots,\nu+1\}$ and each $i \in \{1,\ldots,n\}$.
    \end{enumerate}
From (b) we have that for $i\neq j$ there exists $g\in G $ such that $$f_i\circ g=g\circ f_j$$ hence we obtain $$Df_i(0)Dg(0) = Dg(0)Df_j(0).$$
Since the generators $f_j$ have a diagonal linear part in the chosen coordinates, the same holds for any element of $G$. Hence $Dg(0)$ is a diagonal matrix. Then $Df_i(0) = Df_j(0)$ for all  $i,j$ and their eigenvalues can be listed as $\lambda_1,\ldots,\lambda_n$ where  $\lambda_j$ is a $p^{s_j}$-th root of the unit, with $p$ prime and $s_j\in \mathbb{N}$.
Therefore by Theorem~\ref{Theorem:irreducibledimn} $ G$ is a finite group.
\end{proof}

\section{Riccati foliations}
\label{section:Riccati}
Let $\pi \colon  E \to B$ be a holomorphic fiber space with fiber $F$. A holomorphic foliation $\fa$ on $E$ with singular set $\sing(\fa)\subset E$ is a {\it Riccati foliation} if there is a subset $\sigma \subset B$ such that:
\begin{enumerate}
\item $\pi^{-1}(\sigma)\subset E$ is a union of invariant fibers.
\item $\fa\big|_{E\setminus \pi^{-1}(\sigma)}$ is transverse to the fibers of the
fiber space $\pi\big|_{E\setminus \pi^{-1}(\sigma)}\colon E\setminus \pi^{-1}(\sigma) \to B\setminus\sigma$ in the sense of Ehresmann (\cite{C-LN} Chapter V).
\end{enumerate}

In particular, we have:
\begin{itemize}
\item[(3)] $\dim E= \dim F + \dim B$ and $\dim \fa= \dim B$;

\item[(4)] $\sing(\fa) \subset \pi^{-1}(\sigma)$.
\end{itemize}
The set $\sigma\subset B$ is called  {\it ramification set} of $\fa$.
Since the restriction $\fa\big|_{E\setminus \pi^{-1}(\sigma)}$ is a foliation transverse to the fibers of the fiber space $\pi \colon E\setminus \pi^{-1}(\sigma) \to B \setminus \sigma$ in the ordinary Ehresmann sense, it is completely described by its {\it global holonomy} (\cite{C-LN} Chapter V). This is a lifting paths homomorphism $\phi \colon \pi_1(B \setminus \sigma) \to
\Aut(F)$.
The very basic example is given by the compactification of the foliation $\fa$ on $\mathbb P^1 \times \mathbb P^1$ given in affine coordinates $(x,y)\in  \mathbb C \times \mathbb C$ by a {\it Riccati differential equation} $\frac{dy}{dx} = \frac{a(x) y^2 + b(x) y + c(x)}{p(x)}$ where the coefficients $a,b,c,p$ are polynomials. In this case the fiber space structure is given by the product and the projection
$\pi(x,y)=x$. Using this notion  a {\it Riccati foliation} on  $\mathbb P^m \times \mathbb P^n$ is a codimension $n$ holomorphic foliation with singularities, such  that for some analytic codimension $\geq $ one subset $\sigma\subset \mathbb P^m$, the foliation is transverse to the vertical fibers $\{x\} \times \mathbb P^n, \, x \in \mathbb P^m \setminus \sigma$ while $\sigma \times \mathbb P^n$ is a union of invariant fibers.
A Riccati foliation will be called a {\it Bernoulli foliation} if there is an invariant horizontal fiber $\mathbb P^m \times \{q\}$, for some $q\in \mathbb P^n$.

We investigate the connection between the geometry of the ramification set with the dynamics of a given Riccati foliation.
We first we make a basic remark: if the ramification set is empty (or, more generally if it has codimension $\geq 2$) then $\pi_1(\mathbb P^m\setminus \sigma)=\{0\}$. This implies that all leaves are compact diffeomorphic to $\mathbb P^m$ and the foliation is equivalent to the second projection
$\mathbb P^n \times \mathbb P^m \to \mathbb P^n, \, (x,y)\mapsto y$. Thus we shall assume
that $\sigma\subset \mathbb P^n$ is nonempty of codimension one.

\subsection{Case $n=m=1$}
Let us begin with the dimension two case. More precisely we consider  the case where $\fa$ is a Riccati foliation in $\mathbb P^1 \times \mathbb P^1$,
assuming that $\fa$ has an irreducible ramification set $\sigma\subset \mathbb P^1$. This implies that $\sigma$ is a single point and we may assume that in affine coordinates $(x,y)$ the ramification point is the point $x=\infty, y=0$. Then we may write $\fa$ as given by
a polynomial differential equation $\frac{dy}{dx}=a(x)y^2 + b(x)y + c(x)$.
The global holonomy of $\fa$ is given by an homomorphism $\phi\colon \pi(\mathbb P^1\setminus \sigma) \to \Aut(\mathbb P^1)$. Since $\sigma$ is a single point we have $\mathbb P^1\setminus \sigma =\mathbb C$ is simply-connected and therefore the global  holonomy is trivial.
By the classification of foliations transverse to fibrations (\cite{C-LN} Chapter V) there is a
fibered biholomorphic map $\Phi\colon  \mathbb C \times \mathbb P^1 \to \mathbb C \times \mathbb P^1$ that takes the foliation $\fa$ into the foliation $\mathcal H $ given by
the horizontal fibers $\mathbb C \times \{y\}, y \in \mathbb P^1$.

\begin{Lemma}
\label{Lemma:aut}{\rm
A holomorphic diffeomorphism $\Phi \colon \mathbb C \times \mathbb P^1 \to \mathbb C \times \mathbb P^1$ preserving the vertical fibration  writes  in affine coordinates $(x,y)\in \mathbb C^2 \subset \mathbb C \times \mathbb P^1$ as $\Phi(x,y)=\left( Ax+B, \frac{a(x)y+ b(x)}{c(x)y + d(x)}\right)$ where $a,b,c,d $ are entire functions satisfying $ad-bc=1$, $0 \ne A,B \in \mathbb C$.}
\end{Lemma}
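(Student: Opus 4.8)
The plan is to decompose $\Phi$ into its action on the base and on the fibers, and then to promote the fiberwise Möbius transformations to a single $\SL(2,\co)$-valued holomorphic function. First I would analyze the base. Since $\Phi$ preserves the vertical fibration $\pi(x,y)=x$, it carries each fiber $\{x\}\times\mathbb P^1$ biholomorphically onto another fiber, so it descends to a well-defined map $\phi\colon\co\to\co$ with $\pi\circ\Phi=\phi\circ\pi$. This $\phi$ is holomorphic, being $\phi(x)=\pi(\Phi(x,0))$, and it is a biholomorphism because $\Phi^{-1}$ also preserves the fibration and induces its inverse. Every automorphism of the affine line $\co$ is affine, so $\phi(x)=Ax+B$ for some $A,B\in\co$ with $A\neq 0$; this fixes the first coordinate of $\Phi$.

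Next I would examine the fibers. For each fixed $x$ the restriction $\Phi|_{\{x\}\times\mathbb P^1}\colon\mathbb P^1\to\mathbb P^1$ is a biholomorphism, hence an element $M_x\in\Aut(\mathbb P^1)=\mathrm{PSL}(2,\co)$, and therefore $\Phi(x,y)=(Ax+B,\,M_x\cdot y)$. Evaluating $\Phi$ along the three constant sections $y=0,1,\infty$ produces three holomorphic maps $\co\to\mathbb P^1$ which remain pairwise distinct for every $x$ (because $M_x$ is injective on the fiber), and these three image points determine $M_x$; consequently $x\mapsto M_x$ is a holomorphic map $\co\to\mathrm{PSL}(2,\co)$.

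The crux is to represent $M_x$ as $y\mapsto\dfrac{a(x)y+b(x)}{c(x)y+d(x)}$ with \emph{entire} coefficients normalized so that $ad-bc\equiv 1$. A priori the coefficients of a fiberwise Möbius map are defined only up to a common nonvanishing scalar and need not extend holomorphically over the whole base with a constant determinant. To remove this ambiguity I would invoke the holomorphic double cover $\SL(2,\co)\to\mathrm{PSL}(2,\co)$ (a local biholomorphism with kernel $\{\pm\Id\}$) together with the simple connectivity of $\co$: by the lifting criterion for covering maps the holomorphic map $x\mapsto M_x$ lifts to a continuous, hence (since the covering is a local biholomorphism) holomorphic, map $x\mapsto\widetilde M_x\in\SL(2,\co)$. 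Writing the entries of $\widetilde M_x$ as $a(x),b(x),c(x),d(x)$ yields entire functions with $a(x)d(x)-b(x)c(x)\equiv 1$ that represent $M_x$, which is exactly the asserted normal form.

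The main obstacle is precisely this global lifting step, i.e. passing from a pointwise $\mathrm{PSL}(2,\co)$-description to a globally holomorphic $\SL(2,\co)$-valued function with determinant identically $1$; the affine form of $\phi$ and the fiberwise Möbius structure are routine once one checks that $\Phi$ genuinely preserves the fibration and that $x\mapsto M_x$ depends holomorphically on $x$. I would present the lifting through the covering $\SL(2,\co)\to\mathrm{PSL}(2,\co)$ as the cleanest route, since it simultaneously delivers holomorphic entries and the normalization $ad-bc\equiv 1$ without any case analysis on the vanishing of individual coefficients.
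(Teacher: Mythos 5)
Your proof is correct, and its skeleton coincides with the paper's: descend $\Phi$ to an automorphism of the base $\mathbb{C}$ (hence affine, $x\mapsto Ax+B$), then observe that each fiber restriction is a M\"obius transformation. Where you genuinely add something is the third step. The paper's proof is two sentences: it invokes Picard's theorem for the affine form of the base map and then simply asserts that the fiberwise map ``must write'' as $g(x,y)=\frac{a(x)y+b(x)}{c(x)y+d(x)}$ with \emph{entire} $a,b,c,d$ satisfying $ad-bc=1$ --- which is exactly the point you flag as nontrivial. A priori the fiberwise coefficients are defined only up to a nonvanishing scalar on each fiber, i.e.\ one gets a holomorphic map $x\mapsto M_x$ into $\mathrm{PSL}(2,\mathbb{C})$, not a distinguished quadruple of entire functions with determinant one; the paper leaves this normalization unjustified. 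Your lifting argument through the double covering $\mathrm{SL}(2,\mathbb{C})\to\mathrm{PSL}(2,\mathbb{C})$, using simple connectivity of $\mathbb{C}$ and the fact that the covering is a local biholomorphism, is precisely the missing justification, and it delivers global entire entries with $ad-bc\equiv 1$ in one stroke, with no case analysis on vanishing coefficients. So your route buys rigor at the point where the paper is most terse, at the cost of invoking covering-space theory; the two treatments of the base map (Picard's theorem in the paper, the standard classification of injective entire self-maps of $\mathbb{C}$ in yours) are interchangeable.
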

\begin{proof}[Proof of Lemma~\ref{Lemma:aut}]
 Picard's theorem and the fact that $\Phi$ preserves the fibration $x=const$ show that it is of the form $\Phi(x,y)=(f(x),g(x,y))$ where $f(x)=Ax+B$ is an affine map.  Finally, for each fixed $x\in \mathbb C$ the map $\mathbb P^1 \ni y \mapsto g(x,y) \in \mathbb P^1$ is a diffeomorphism so it must write as $g(x,y)=\frac{a(x)y +b(x)}{c(x)y + d(x)}$ for some entire functions $a,b,c,d$ satisfying $ad - bc=1$.
\end{proof}

In particular we conclude that the leaves of  $\fa$ are diffeomorphic with $\mathbb C$ (including the one contained in the invariant fiber $\{(0,\infty)\}\times \mathbb P^1$, and $\fa$ admits a holomorphic first integral $g\colon \mathbb C \times \mathbb P^1 \to \mathbb P^1$ of the above form $g(x,y)=\frac{a(x)y +b(x)}{c(x)y + d(x)}$.

\subsection{Case $m=2, n=1$}

Assume now that $\fa$ is a codimension one Riccati foliation in $\mathbb P^2 \times \mathbb P^1$. If the codimension one component $\sigma_1\subset \sigma$ of the ramification set $\sigma\subset \mathbb P^2$ is irreducible, smooth or with (double ordinary) normal crossings, then the fundamental group $\pi_1(\mathbb P^2 \setminus \sigma)$ is finite cyclic of order $\deg (\sigma_1)$ (Zariski-Fulton-Deligne). In this case the global holonomy of $\fa$ is a finite cyclic subgroup of $\Aut(\mathbb P^1)$ which corresponds to one of the following possibilities:

\begin{enumerate}
\item A cyclic subgroup generated by a map of the form $z \mapsto \xi z$ where $\xi$ is a root of the unit of order $k \leq \deg \sigma$.

 \item The group generated by the inversion $f(z)=\frac{1}{z}$.
 \end{enumerate}

     Assume that we are in case (1) above. This gives a function $z\mapsto z^k$ in the fiber $\mathbb P^1$ which admits a holonomy extension to $(\mathbb P^2 \setminus \sigma) \times \mathbb P^1 \to \mathbb P^1$ which is constant along the leaves of $\fa$ in
$(\mathbb P^2 \setminus \sigma )\times \mathbb P^1$. This shows that $\fa$ admits  a holomorphic first integral $\vr\colon (\mathbb P^2 \setminus \sigma )\times \mathbb P^1 \to \mathbb P^1$.

Assume now that we are in case (2). In this case we can take the holonomy invariant function $z\mapsto (\ln z)^2$ and extend it to a Liouvillian first integral $\vr$ for $\fa$ in
$\mathbb P^2 \setminus \sigma$.

For a different framework we shall need the remark below:

\begin{Remark}
{\rm
  It is well-known that the group of automorphisms $\Aut(\mathbb P^n)$ is  the projectivization of the linear group $\GL(n+1,\mathbb C)$ of non-singular linear maps of $\mathbb C^{n+1}$ and therefore isomorphic to $\mathbb PSL(n,\mathbb C)$.
}
\end{Remark}

Next we consider another situation:

\begin{Theorem}
\label{Theorem:Riccati1}{\rm
Let $\fa$ be a {\it Bernoulli}  foliation on $\mathbb P^2 \times \mathbb P^1$. Assume that the ramification set $\sigma\subset \mathbb P^2$ is irreducible (not necessarily smooth nor normal crossing type) of degree   $p^{s}$ for some prime number $p$ and some $s\in \mathbb N$.  Then the global holonomy of  $\fa$ is finite cyclic. In particular, the leaves of $\fa$ are closed in  $(\mathbb P^2 \setminus \sigma) \times \mathbb P^1$, i.e., $\lim(\fa)\subset \sigma \times \mathbb P^1$. Moreover, $\fa$ admits a holomorphic first integral
$\vr \colon (\mathbb P^2 \setminus \sigma)\times \mathbb P^1\to \mathbb P^1$.}
\end{Theorem}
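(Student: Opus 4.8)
The plan is to mimic the proof of Theorem~\ref{Theorem:leafholonomy}, using the extra Bernoulli hypothesis to place ourselves precisely in the situation of Theorem~\ref{Theorem:irreducibledimn} in fiber dimension one. Over $\mathbb P^2\setminus\sigma$ the foliation $\fa$ is transverse to the vertical fibration, so it is completely determined by its global holonomy homomorphism $\phi\colon \pi_1(\mathbb P^2\setminus\sigma)\to \Aut(\mathbb P^1)$ (\cite{C-LN}, Chapter V); set $G=\phi(\pi_1(\mathbb P^2\setminus\sigma))\subset\Aut(\mathbb P^1)$. To see that $G$ is irreducible, I would choose a line $\ell\colon\mathbb P^1\hookrightarrow\mathbb P^2$ in general position with respect to $\sigma$, so that $\ell(\mathbb P^1)$ meets $\sigma$ transversely at $\deg\sigma=p^s$ nonsingular points $p_1,\dots,p_{p^s}$. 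By the Lefschetz hyperplane section theorem the induced map $\pi_1(\ell(\mathbb P^1)\setminus\{p_1,\dots,p_{p^s}\})\to\pi_1(\mathbb P^2\setminus\sigma)$ is surjective, the meridian loops satisfy the relation $\gamma_1\cdots\gamma_{p^s}=1$ in $\ell(\mathbb P^1)\cong\mathbb P^1$, and by Deligne's theorem (the irreducibility of $\sigma$) they are pairwise conjugate. Pushing forward by $\phi$, the maps $g_j:=\phi([\gamma_j])$ form a basic set of generators of $G$ in the sense of Definition~\ref{Definition:groupgen}: $g_1\circ\cdots\circ g_{p^s}=\Id$ and the $g_j$ are pairwise conjugate in $G$.

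Here is where the Bernoulli hypothesis enters. The invariant horizontal fiber $\mathbb P^2\times\{q\}$ forces every element of $G$ to fix the common point $q\in\mathbb P^1$; choosing the affine coordinate $z$ with $q=\infty$, the elements of $G$ become affine maps $z\mapsto az+b$. Since conjugation inside $G$ preserves the multiplier $a$, the pairwise conjugate generators $g_j$ share a single multiplier $\eta$, and from $g_1\circ\cdots\circ g_{p^s}=\Id$ we get $\eta^{p^s}=1$, so the order of $\eta$ is a power $p^r$ of the prime $p$ with $r\le s$. Regarding the $g_j$ as germs at the common fixed point $q$, they all have the same linear part, whose single eigenvalue is a root of unity of order $p^r$. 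Thus $G$ is an irreducible subgroup of $\Diff(\co,0)$ whose basic generators share one linear part satisfying the arithmetic hypothesis of Theorem~\ref{Theorem:irreducibledimn} (with $n=1$), and that theorem yields that $G$ is finite cyclic and analytically linearizable, conjugate to the group generated by $z\mapsto\xi z$ for a root of unity $\xi$ of order $p^r$. Alternatively, one may apply Lemma~\ref{keylemma} directly to the affine group, which forces $b_1=\cdots=b_{p^s}$ and exhibits $G$ as cyclic of order $p^r$.

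The geometric conclusions then follow from finiteness of $G$. In the linearizing coordinate the function $z\mapsto z^{p^r}$ is $G$-invariant, so by the classification of foliations transverse to fibrations (\cite{C-LN}, Chapter V) it extends to a holonomy-invariant holomorphic first integral $\vr\colon(\mathbb P^2\setminus\sigma)\times\mathbb P^1\to\mathbb P^1$ of $\fa$. Since $G$ is finite, each $\phi$-orbit in the fiber is finite, so every leaf of $\fa$ over $\mathbb P^2\setminus\sigma$ is a finite covering of $\mathbb P^2\setminus\sigma$ and is therefore closed in $(\mathbb P^2\setminus\sigma)\times\mathbb P^1$; equivalently $\lim(\fa)\subset\sigma\times\mathbb P^1$.

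I expect the main obstacle to be the construction in the first paragraph: verifying rigorously that the global holonomy carries a basic set of generators satisfying \emph{both} irreducibility conditions, that is, that the meridians around the $p^s$ transverse intersection points simultaneously realize the relation $\gamma_1\cdots\gamma_{p^s}=1$ (from the general-position line and Lefschetz) and the pairwise conjugacy (from Deligne's irreducibility theorem), when $\sigma$ is permitted to be singular and of non-normal-crossing type. The reduction to $\Diff(\co,0)$ and the appeal to Theorem~\ref{Theorem:irreducibledimn} are then essentially formal, and the closedness of the leaves together with the existence of the first integral are immediate consequences of the finiteness of the holonomy.
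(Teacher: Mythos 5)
Your proposal is correct and follows essentially the same route as the paper's own proof: the paper likewise constructs a basic set of $p^s$ pairwise-conjugate generators of the global holonomy via the Lefschetz/Deligne argument (borrowed from Theorem~\ref{Theorem:leafholonomy}), uses the Bernoulli fixed point $q$ to view the holonomy as an irreducible subgroup of $\Diff(\mathbb C,0)$, applies Theorem~\ref{Theorem:irreducibledimn} with $n=1$, and then derives the first integral and the closedness of leaves exactly as you do. Your explicit verification that the germs at $q$ share one multiplier $\eta$ with $\eta^{p^s}=1$ is a welcome detail that the paper leaves implicit.
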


\begin{proof}
The global holonomy identifies with a subgroup $H\subset \Aut(\mathbb P^1)$.
  Since $\sigma\subset \mathbb P^2$ is irreducible, $H$ is irreducible.
  Since $\deg(\sigma)=p^s$ and $\sigma$ is irreducible it follows from the same ideas in the proof of Theorem~\ref{Theorem:leafholonomy} that $H$ admits a basic set of generators
  of the form $\{f_1,\ldots,f_{\nu+1}\}\subset \Aut(\mathbb P^1)$ with $\nu+1=p^s$. By hypothesis $\fa$ has an invariant horizontal fiber say $\mathbb P^2 \times \{q\}$. This implies that $H$ has a fixed point at $\{q\}$. We denote by $ H(q)\subset \Diff(\mathbb C^1,0)$ the subgroup induced by the germs at $q$ of maps $h \in H$ (we may identify $q=0$). This group is irreducible and has a basic set of generators consisting of the germs $f_{j,q}$ at $q$ of the maps $f_j, j=1,\ldots,\nu+1=p^s$. By Theorem~\ref{Theorem:irreducibledimn} for dimension $n=1$ this implies that $H(q)$ is finite. In particular $H(q)$ is abelian and each map $f_{j,q}$ has finite order. By the identity principle the maps  $f_j$ commute and have finite order. This implies that $H$ is finite cyclic analytically conjugate in $\mathbb P^1$ to the cyclic group generated by $z\mapsto e^{2\pi i /k}  z$ for some $k \in \mathbb N^*$. As above we can extend the function $z^k$ as a holomorphic  first integral $\vr \colon (\mathbb P^2 \setminus \sigma)\times \mathbb P^1\to \mathbb P^1$ for $\fa$.

Now we proceed. Given a leaf $L$ of $\fa$ not contained in $\pi^{-1}(\sigma)$ we claim that the closure $\ov L\subset \mathbb P^2 \times \mathbb P^1$ is contained in $\pi^{-1}(\sigma)$.
 Indeed, given a generic point $p \in \mathbb P^2 \setminus \sigma$ the fiber $F_p:=\pi^{-1}(p)=\{p\}\times \mathbb P^1$ is transverse to $\fa$. Let us prove that the intersection $L\cap F_p$ is a discrete set. Given two points $z_1, z_2 \in F_p \cap L$ we choose a path $\gamma\subset L$ joining $z_1$ to $z_2$ and project this path into a path $\gamma_0\subset \mathbb P^2 \setminus \sigma$ (recall that $\pi^{-1}(\sigma)$ is invariant). The path $\gamma_0$ is closed based at $p$.
The corresponding global holonomy map $h_{\gamma_0}$ to $\gamma_0$ is such that $h_{\gamma_0}(z_1)=z_2$.
Since the global holonomy group $H$ is finite and cyclic this implies that $\# (F_p \cap L) \leq |H| < \infty$. This already shows that $\lim(\fa)\subset \pi^{-1}(\sigma)$.

\end{proof}

In the above theorem the ramification set is irreducible but we make no hypothesis on the type of singularities it may have. The price we pay is to assume that there is a non-vertical invariant algebraic  hypersurface. This condition is natural in the following situation.
Let $R$ be a rational function $R \colon \mathbb P^m \times \mathbb P^n \to \mathbb P^n$. We shall define the {\it ramification set} of $R$ (with respect to the vertical fibration) as the set $\sigma \subset \mathbb P^m$ of points $p$ for which the fiber $\{p \} \times \mathbb P^n$ is not transverse to $\fa$ (this means that there is some point $q \in \mathbb P^n$ for which the leaf of $\fa$ through  $(p,q)$ is not transverse to the fiber $\{p\}\times \mathbb P^n$).
In general $\sigma$ is an algebraic subset of codimension $\geq 1$  in the projective plane $\mathbb P^m$.

Let us consider an analytic family of  Riccati foliations in $\mathbb P^1 \times \mathbb P^1$ given in affine coordinates by the 1-forms $\omega_t=(1 + tp(x))dy - t(a(x)y^2 + b(x)y)dx$ where
$p(x), a(x), b(x)$ are polynomials. If $\fa_t$ denotes the foliation on $\mathbb P^1 \times \mathbb P^1$ induced by $\omega_t$ then $\fa_0: \omega_0=dy$ is the horizontal fibration, given by the second coordinate projection. The ramification set of $\fa_0$ is irreducible
empty, while for $t\ne 0$ the ramification set of $\fa_t$ is given by $p(x)=-1/t$ and possibly the point at the infinity $x=\infty$.  This set is not irreducible for many choices of the coefficients $a, b, p$. In general $\fa_t$ is not analytically equivalent to a trivial foliation in $\mathbb P^1 \times \mathbb P^1$. Thus, an irreducible ramification set can deform into a reducible ramification set during a deformation by Riccati foliations.

 Taking this into account we can state:
\begin{Theorem}
\label{Theorem:Riccati2}{\rm
Let $\fa$ be  the foliation by level surfaces of a rational function $R\colon \mathbb P^2\times \mathbb P^n \to \mathbb P^n$. Assume that codimension one component of the ramification set $\sigma\subset \mathbb P^2$ of $R$ is empty or  irreducible (not necessarily smooth nor normal crossing type) of  degree   $p^{s}$ for some prime number $p$ and some $s\in \mathbb N$. Let now $\{\fa_t\}_{t \in \mathbb D}$ be an  analytic deformation of $\fa=\fa_0$ by
  Riccati foliations on $\mathbb P^2 \times \mathbb P^n$ with irreducible ramification set $\sigma(t)\subset \mathbb P^2$.  Assume that there is some level $(R=c)$ of $R$ which is invariant by each foliation $\fa_t$.
  Then the global holonomy of  $\fa_t$ is finite cyclic for each $t$ close to $0$. In particular, the leaves of $\fa_t$ are closed in  $(\mathbb P^2 \setminus \sigma(t)) \times \mathbb P^n$, i.e., $\lim(\fa_t)\subset \sigma(t) \times \mathbb P^n,$ for all $t$ close to $0$. If $R$ is the second projection $\mathbb P^2 \times \mathbb P^n \to \mathbb P^n$ then $\fa_t$ is analytically conjugate to $\fa$ in $(\mathbb P^2 \setminus \sigma(t)) \times \mathbb P^n$.
}
\end{Theorem}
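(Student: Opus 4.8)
The plan is to transport everything to the germ of the global holonomy at the fixed point supplied by the invariant level, and there invoke the finiteness machinery of \S\ref{section:Riccati} together with Theorem~\ref{Theorem:stability}. First I would describe $\fa_t$ on the transverse region by its global holonomy representation $\phi_t\colon \pi_1(\mathbb P^2\setminus\sigma(t))\to\Aut(\mathbb P^n)$, with image $H_t$, following the Ehresmann classification (Chapter V of \cite{C-LN}). Since each $\sigma(t)$ is irreducible and, being a member of an analytic family of plane curves, has degree locally constant equal to $p^{s}$, the Deligne--Lefschetz argument already used in the proofs of Theorem~\ref{Theorem:leafholonomy} and Theorem~\ref{Theorem:Riccati1} produces a basic set of generators $f_{1,t},\ldots,f_{p^{s},t}$ of $H_t$: meridians of the $p^{s}$ points of a generic line section of $\sigma(t)$, pairwise conjugate and with $f_{1,t}\circ\cdots\circ f_{p^{s},t}=\Id$. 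Hence $H_t$ is irreducible for every $t$. Because the level $(R=c)$ is invariant by each $\fa_t$ and meets a generic vertical fiber $\{p\}\times\mathbb P^n$, it furnishes a common fixed point $q\in\mathbb P^n$ for $H_t$; passing to germs at $q$ yields irreducible subgroups $H_t(q)\subset\Diff(\co^n,0)$ carrying the inherited basic generators.

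Next I would analyze the central fiber $t=0$. There $R$ is a genuine first integral of $\fa_0$, so the leaves are the algebraic level sets of $R$ and $H_0$ is finite; the irreducibility together with $\nu+1=p^{s}$ then forces $H_0(q)$ to be finite cyclic of order $p^{\ell}$ for some $\ell\le s$, exactly as the branched covering $z\mapsto z^{k}$ arises in the proof of Theorem~\ref{Theorem:Riccati1}, its order dividing $\deg\sigma_1=p^{s}$. In the distinguished case where $R$ is the second projection one has $\sigma=\emptyset$, so $\fa_0$ is the horizontal fibration and $H_0(q)$ is trivial.

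The heart of the matter is to legitimately regard $\{H_t(q)\}$ as an analytic deformation of $H_0(q)$ in the precise sense of the definition preceding Theorem~\ref{Theorem:stability}, i.e.\ that the perturbation terms carry \emph{no} linear part; equivalently, that the linear holonomy $A_{j,t}=Df_{j,t}(0)$ at $q$ is independent of $t$. This is where I expect the main obstacle. The generators being pairwise conjugate forces all $A_{j,t}$ to share a common spectrum, and the relation $A_{1,t}\cdots A_{p^{s},t}=\Id$ together with the abelian structure inherited from the finite cyclic central fiber pins these eigenvalues to the finite set of $p^{s}$-th roots of unity; since they vary continuously in $t$ and coincide at $t=0$ with roots of unity of order a power of $p$, a discreteness argument should freeze them, so that the linear part is constant and of the type required by Theorem~\ref{Theorem:irreducibledimn}. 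The delicate point is precisely to guarantee that the linear holonomy remains a root of unity of bounded order for \emph{all} small $t$ and does not wander off into the character variety, which is the genuine content hidden in passing from the flat transverse structure at $t=0$ to nearby $t$. Granting this, Theorem~\ref{Theorem:stability} applies verbatim: $H_t(q)$ is irreducible and deforms the finite cyclic group $H_0(q)$ of order a power of $p$, hence is finite cyclic for $t$ near $0$. In the second-projection subcase the frozen linear part equals $\Id$, so the generators are tangent to the identity and Proposition~\ref{Proposition:groups n} (equivalently the last clause of Theorem~\ref{Theorem:stability}) gives $H_t(q)=\{\Id\}$.

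It then remains to globalize and read off the geometric statements. Finiteness of the germ $H_t(q)$, together with the rigidity of $\Aut(\mathbb P^n)\cong\mathbb P SL(n+1,\co)$ (an automorphism is determined by its germ at a point, by the identity principle), upgrades $H_t$ itself to a finite cyclic group. Exactly as in the final paragraph of the proof of Theorem~\ref{Theorem:Riccati1}, any leaf $L\not\subset\pi^{-1}(\sigma(t))$ meets a generic fiber $\{p\}\times\mathbb P^n$ in a set of cardinality at most $|H_t|$, since two intersection points are joined by a loop whose global holonomy lies in the finite group $H_t$; consequently $L$ is closed in $(\mathbb P^2\setminus\sigma(t))\times\mathbb P^n$ and $\lim(\fa_t)\subset\sigma(t)\times\mathbb P^n$. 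Finally, when $R$ is the second projection the holonomy $H_t$ is trivial, so by the Ehresmann classification of foliations transverse to a fibration together with the fibered normal form of Lemma~\ref{Lemma:aut}, $\fa_t$ is analytically conjugate to the horizontal fibration $\fa$ over $(\mathbb P^2\setminus\sigma(t))\times\mathbb P^n$.
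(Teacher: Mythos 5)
Your overall architecture (global holonomy representation, Deligne--Lefschetz meridians, the stability theorem applied at the invariant level, identity principle to globalize) is the same as the paper's, but there is one genuine gap: the claim that the invariant level $(R=c)$ ``furnishes a common fixed point $q\in\mathbb P^n$ for $H_t$''. A level $R_c$ of a rational map $R\colon \mathbb P^2\times\mathbb P^n\to\mathbb P^n$ has codimension $n$, so it meets a generic vertical fiber $F_{x_0}=\{x_0\}\times\mathbb P^n$ in a finite set $R_c\cap F_{x_0}=\{y_1,\ldots,y_r\}$ with $r\geq 1$; invariance of $R_c$ under $\fa_t$ only says that each $f\in H_t$ permutes this set, not that it fixes a point of it. In particular the meridian basic generators of $H_t$ need not fix any $y_i$, so your germ groups $H_t(q)$ ``carrying the inherited basic generators'' are not defined, and your final identity-principle step would at best promote finiteness of the stabilizer of one point, not of $H_t$. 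The paper's proof is built precisely around this issue: it applies Theorem~\ref{Theorem:stability} to the leaf holonomy groups $H_{t,y_1}=\Hol(\fa_t,L_{t,y_1})$ (germs at $y_1$ of those global holonomy maps that do fix $y_1$), obtaining finite cyclic groups of uniformly bounded order, and then treats an arbitrary $f\in H_t$ by the permutation trick: $f(\{y_1,\ldots,y_r\})=\{y_1,\ldots,y_r\}$ forces $f^{r!}(y_1)=y_1$, so the germ of $f^{r!}$ lies in $H_{t,y_1}$, whence $f^{r!}$ --- and then $f$ itself --- has uniformly bounded finite order as an automorphism of $\mathbb P^n$, and $H_t$ is finite cyclic. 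A genuine common fixed point exists only in the Bernoulli situation of Theorem~\ref{Theorem:Riccati1} (invariant horizontal section) and in the second-projection case, where indeed $r=1$; your argument silently assumes one of these.

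A second, smaller, problem: you correctly isolate as ``the main obstacle'' the need to know that the linear holonomy at the base point is independent of $t$ (so that the family of germ groups is an analytic deformation in the strict sense required by Theorem~\ref{Theorem:stability}), but the fix you sketch is circular. You pin the eigenvalues of $A_{j,t}$ to roots of unity using ``the abelian structure inherited from the finite cyclic central fiber''; for $t\neq 0$ neither abelianness nor finiteness of $H_t$ is available --- that is exactly the conclusion being proved --- and the relation $A_{1,t}\cdots A_{p^s,t}=\Id$ together with pairwise conjugacy does not by itself force eigenvalues to be roots of unity in a possibly nonabelian group. For comparison, the paper does not really prove this point either: it asserts that constancy of $\pi_1(\mathbb P^2\setminus\sigma(t))$ makes the family of leaf holonomy groups an analytic deformation of the $t=0$ holonomy and then invokes Theorem~\ref{Theorem:stability}; so your instinct about where the difficulty sits is right, but your proposed argument does not close it. Finally, a minor omission: you skip the paper's preliminary reduction showing that if the ramification set of $\fa_0$ has codimension $\geq 2$ then (by a covering argument plus Hartogs extension) it is empty and $R$ may be taken to be the second projection.
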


\begin{proof}
First we consider the case where $\sigma(\fa)$ has codimension $\geq 2$. We denote by
$P_1\colon \mathbb P^m \times \mathbb P^n \to \mathbb P^m$ the first coordinate projection. Given a leaf
$L\in\fa$ the restriction $P_1\big|_L\colon L \to \mathbb P^m \setminus \sigma(\fa)$ is a covering map. The fundamental group $\pi_1(\mathbb P^m \setminus \sigma(\fa))$ is trivial because $\codim\; \sigma(\fa)\geq 2$ in $\mathbb P^m$. This implies that $P_1\big|_L$ is a holomorphic diffeomorphism from $L$ to $B_0:=\mathbb P^m \setminus \sigma(\fa)$.
By Hartogs' extension theorem, applied to the inverse $(P_1\big|_L)^{-1}$, again using the fact that $\codim \;\sigma(\fa) \geq 2$, we can extend $P_1\big|L$ to a holomorphic diffeomorphism between $\overline{L}$ and $\mathbb P^m$. Moreover, by this extension we conclude that indeed $\sigma(\fa)=\emptyset$. Thus the function $R$ has levels that correspond to the horizontal fibration, i.e., it depends only on the second coordinate.
If we take $R$ as a primitive rational function then we may assume that $R(x,y)=y$ in coordinates $(x,y)\in \mathbb P^m \times \mathbb P^n$.
Now we assume that $\sigma(\fa)\ne \emptyset$ is irreducible of degree $p^s$.
Given $t$ close enough to $0$, by hypothesis the ramification set $\sigma(t)$ of the Riccati foliation $\fa_t$ is still irreducible and therefore has degree $p^s$. Indeed, $\{\sigma(t)\}_{t\in D}$ defines an analytic family of irreducible algebraic curves in $\mathbb P^2$. In particular, the fundamental groups $\pi_1(\mathbb P^2 \setminus \sigma(t))$ are the same. This implies that the  holonomy group $\Hol(\fa_t,L_c)$ of the common leaf contained in $L_c\subset R_c$ is an analytic deformation of the holonomy group
of $\fa_0=\fa$. Let us be more precise.
Given a non-invariant fiber $F_{x_0}: \{x_0\}\times\mathbb P^n$  and the common invariant level $R_c: (R=c)$  there is a finite intersection set $R_c\cap F_{x_0}=\{y_1,\ldots,y_r\}$. Denote by $H_t$ the global holonomy group of $\fa_t$ given by the representation $H_t\subset \Aut(F_{x_0})$. Given the point $y_1 \in F_{x_0}$ we consider the holonomy group $H_{t,y_1}:=\Hol(\fa_t,L_{t,y_1})$ of the leaf $L_{t,y_1}$ of $\fa_t$ passing through $y_1$ and calculated with respect to the transverse section contained in the fiber $F_{x_0}$. By hypothesis $R_c$ is invariant by $\fa_t$ so that
 $L_{t,y_1}\subset R_c$. By Theorem~\ref{Theorem:stability} each group $H_{t,y_1}$ is finite cyclic of uniformly bounded order  for $t$ close to $0$.
Given $t\approx 0$ and a global holonomy map $f \in H_t$ we have that $f(R_c\cap F_{x_0})=R_c\cap F_{x_0}=\{y_1,\ldots,y_r\}$. Thus $f^{r!}(y_1)=y_1$. This implies that $f^{r!}\in H_{t,y_1}$ and since this group is cyclic finite of uniformly bounded order, this shows that
each map $f\in H_t$ has a uniformly bounded finite order. Indeed, since each holonomy map
in $H_{t,y_1}$ comes from a global holonomy map in $H_t$ this shows that each global holonomy group $H_t$ is finite cyclic of a uniformly bounded order.
The limit set part is proved as before.
Assume now that $R$ is the second coordinate projection (for instance if $\sigma(\fa)$ has codimension $\geq 2$). Then the global holonomy group $H_0$ is trivial. By Theorem~\ref{Theorem:stability} the holonomy groups $H_{t,y_1}$ are trivial. Also note that $r=1$ i.e, $R_c\cap F_{x_0}$ consists of a single point. Similarly to above we then conclude that $H_t$ is trivial for each $t\approx 0$. This shows that the foliation $\fa_t$ is equivalent to $\fa_0$ in $\mathbb P^2 \setminus \sigma(t) \times \mathbb P^n$.
\end{proof}

\subsection{Case $m \geq 2, n \geq 2$}

Now we study Riccati foliations in $\mathbb P^m \times \mathbb P^n, m \geq, \, n \geq 2$ under some hypothesis on the ramification set $\sigma\subset \mathbb P^m$.
\begin{proof}[Proof of Theorem~\ref{Theorem:Riccati3}]
The proof is pretty much the same given for the other cases $m=2, n=1$. For using the irreducibility of $\sigma_1 \subset \mathbb P^m$ it is enough to apply Lefshetz hyperplane section theorem together with Deligne's theorem for $m=2$. All the rest goes as in Theorem~\ref{Theorem:Riccati2}.
\end{proof}

\section{Some examples and comments}

Let us address  the questions  mentioned in the Introduction.
We summarize our conclusions as follows:

\begin{Proposition}
\label{Proposition:examples}{\rm Regarding irreducible groups of germs we have:
\begin{enumerate}
\item Conditions (a) and (b) in Definition~\ref{Definition:irreduciblegroup} are not equivalent.
\item A finite abelian subgroup $G\subset \Diff(\mathbb C^n,0)$ is not necessarily irreducible.

\item An irreducible subgroup $G\subset \Diff(\mathbb C^n,0)$ is not necessarily finite.

\item A \underline{finite} irreducible subgroup $G\subset \Diff(\mathbb C^n,0)$ is not necessarily cyclic.

\end{enumerate}
}
\end{Proposition}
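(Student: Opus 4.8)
The plan is to prove each of the four assertions by producing an explicit example, most of which are linear and therefore live inside $\GL(n,\co)\subset\Diff(\co^n,0)$. For item (1) I would show directly that neither of the conditions (a), (b) in Definition~\ref{Definition:irreduciblegroup} forces the other. To see that (a) does not force (b), take $G\subset\Diff(\co,0)$ cyclic of order $3$ generated by $f(z)=e^{2\pi i/3}z$ and use the generating set $\{f,f^{-1}\}$: then $f\circ f^{-1}=\Id$, so (a) holds, while $f$ and $f^{-1}=f^{2}$ are distinct and, the group being abelian, not conjugate, so (b) fails. Conversely, to see that (b) does not force (a), take any $f\ne\Id$ and the one-element generating set $\{f\}$: condition (b) holds trivially since every element is conjugate to itself, but the product equals $f\ne\Id$, so (a) fails.

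For item (2) I would invoke the observation made in the Introduction that an irreducible abelian group is necessarily finite cyclic; hence it suffices to exhibit a finite abelian group that is not cyclic. The group $G=\{\diag(\pm 1,\pm 1)\}\subset\GL(2,\co)\subset\Diff(\co^2,0)$, isomorphic to $\mathbb{Z}/2\oplus\mathbb{Z}/2$, is finite abelian but non-cyclic, and therefore cannot be irreducible. For item (3) I would simply point to the first example given immediately after the proof of Theorem~\ref{Theorem:irreducibledimn}: there $G\subset\Diff(\co^2,0)$ has generators with common linear part whose eigenvalues are roots of unity of orders $2$ and $3$ (two distinct primes), and the group is shown to be irreducible yet infinite and non-linearizable. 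This is precisely the situation where the single-prime hypothesis of Theorem~\ref{Theorem:irreducibledimn} is indispensable.

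The genuinely new point, and the step I expect to be the main obstacle, is item (4): producing a finite irreducible group that is not cyclic. Because an irreducible abelian group is cyclic, such an example must be non-abelian, so I must arrange a non-abelian finite group together with a basic set of generators that are pairwise conjugate and whose product is $\Id$. The model case is the symmetric group $S_3$, realized linearly in $\GL(2,\co)$ through its faithful two-dimensional irreducible representation. In $S_3$ all transpositions are conjugate, and for the two distinct transpositions $\tau_1=(1\,2)$ and $\tau_2=(1\,3)$ the product $\tau_1\tau_2$ is a $3$-cycle, so $(\tau_1\tau_2)^3=\Id$. Taking the ordered generating set $(\tau_1,\tau_2,\tau_1,\tau_2,\tau_1,\tau_2)$, with $\nu+1=6$, yields a product equal to the identity, so (a) holds, while all six generators are transpositions and hence pairwise conjugate, so (b) holds. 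Since $\tau_1$ and $\tau_2$ already generate all of $S_3$, this realizes $S_3$ as a finite irreducible subgroup of $\Diff(\co^2,0)$ that is non-abelian, hence non-cyclic.

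The only points requiring care in item (4) are that the chosen linear representation be faithful, so that the abstract group $S_3$ genuinely embeds in $\Diff(\co^2,0)$, and that the elements conjugating $\tau_i$ to $\tau_j$ demanded by condition (b) actually lie in $G$; both are immediate for $S_3$, since the standard representation is faithful and any two transpositions are conjugate by a group element. I would remark that this construction is flexible: any finite non-abelian group generated by a family of pairwise-conjugate involutions whose suitable alternating product is trivial serves equally well, with $S_3$ being the smallest such instance.
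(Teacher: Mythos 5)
Your proposal is correct, and it reaches all four items by routes that differ from the paper's. For (1) the paper also proves both directions of the independence, but with explicit matrix groups: $A=\diag(2,1)$ together with a $B$ conjugate to it in $\langle A,B\rangle$ gives (b) without (a), while three unipotent $3\times 3$ matrices with product $\Id$ that are pairwise non-conjugate (all elements of the group being triangular) give (a) without (b); your cyclic-group examples (the tuple $(f,f^{-1})$, resp. the singleton $(f)$) are more degenerate but logically sufficient, and if anyone objects to a one-element basic set you can replace it by $(f,f)$ with $f$ of order at least $3$. For (2) your route --- the Klein four-group $\{\diag(\pm1,\pm1)\}$ plus the Introduction's observation that an irreducible abelian group must be finite cyclic --- is actually tighter than the paper's: the paper takes $\langle \diag(i,-i),\diag(1,\lambda)\rangle$ with $\lambda^n=1$, $n>4$, and argues from the generators having different orders, but that group is cyclic (hence irreducible) whenever $\gcd(n,4)=1$, so the paper's example really requires $\gcd(n,4)>1$; your example carries no such caveat, since a non-cyclic abelian group can never be irreducible. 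For (3) the paper constructs a fresh linear example --- involutions $A,B,C\in\GL(2,\mathbb{R})$ pairwise conjugate inside $G=\langle A,B,B,C,C,A\rangle$, with $BC$ of infinite order by a sign/growth argument --- whereas you cite Example 2.1; that is legitimate, since the example precedes the proposition in the paper and is proved there to be irreducible and infinite. For (4) your $S_3$ example is a distilled version of what the paper actually does: the paper's two involutions $A,B$ in fact generate a copy of $S_3$ (their product has order $3$), and the paper embeds the resulting configuration diagonally into $\GL(4,\co)$ via block matrices $D_{A,A},D_{A,B},D_{B,B},D_{B,A}$ whose product is $\Id$ and which are pairwise conjugate; realizing $S_3$ directly through its faithful two-dimensional representation with basic set $(\tau_1,\tau_2,\tau_1,\tau_2,\tau_1,\tau_2)$ (or, more economically, $(\tau_1,\tau_2,\tau_2,\tau_1)$) lives already in $\GL(2,\co)$ and makes the group-theoretic mechanism --- pairwise conjugate involutions with trivial alternating product --- transparent, with the conjugator $\tau_1\tau_2\tau_1=(2\,3)$ visibly in the group. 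In short, the paper buys fully explicit, self-contained matrix computations; your version buys brevity and conceptual clarity by leaning on general principles and on material established earlier in the paper.
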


\begin{proof}
We start with (1). We look at the linear case. Let

\[
A
=\left(
  \begin{array}{cc}
    2 & 0\\
     0 &  1
  \end{array}
\right),\hspace{0.5 cm}
B = \left(
  \begin{array}{cc}
    2 & 0\\
     1 &  1
  \end{array}
\right)
\]

and $G =<A,B>\subset \GL(2,\mathbb C)$. Let us now see that $A$ and $B$ are conjugate in $G$, since

$$H=B^{-1}A=\left( \begin{array}{cc}  \frac{1}{2} & 0\\
     -\frac{1}{2} &  1
  \end{array}
\right)  \left(
  \begin{array}{cc}
    2 & 0\\
     0 &  1
  \end{array}
\right)=\left(
  \begin{array}{cc}
    1 & 0\\
     -1 &  1
  \end{array}
\right)\in G$$ and
$$AH= \left(
  \begin{array}{cc}
    2 & 0\\
     0 &  1
  \end{array}
\right)\left(
  \begin{array}{cc}
    1 & 0\\
     -1 &  1
  \end{array}
\right)= \left(
  \begin{array}{cc}
    2 & 0\\
     -1 &  1
  \end{array}
\right)= \left(
  \begin{array}{cc}
    1 & 0\\
     -1 &  1
  \end{array}
\right) \left(
  \begin{array}{cc}
    2 & 0\\
     1 &  1
  \end{array}
\right)=HB.$$ Therefore {\em $G$ satisfies  }(b)\;{\em  but not }(a).

Now we consider

 \[
\tilde A
=\left(
  \begin{array}{ccc}
    1 & 0&0\\
     0 &  1&0\\1&0&1
  \end{array}
\right),\hspace{0.5 cm}\tilde B = \left(
  \begin{array}{ccc}
    1 & 0&0\\
     0 &  1&0\\0&1&1
  \end{array}
\right),\hspace{0.5 cm}\tilde C
=\left(
  \begin{array}{ccc}
    1 & 0&0\\
     0 &  1&0\\-1&-1&1
  \end{array}
\right)
\]

and $\tilde G =<\tilde A,\tilde B,\tilde C>\subset \GL(3,\mathbb C)$. Note that $\tilde G$ satisfies (a). Let us see that it does not satisfy (b). Indeed, since the generators of $\tilde G$ are upper triangular matrices then any element of $\tilde G$ must be an upper triangular matrix. Then it is not difficult to see that there is no conjugation in $\tilde G$ between  $\tilde A$ and $\tilde B$.

Now we show (2). Put
\[
A
=\left(
  \begin{array}{cc}
    i & 0\\
     0 &  -i
  \end{array}
\right),\hspace{0.5 cm}
B = \left(
  \begin{array}{cc}
    0 & 1\\
     1 &  0
  \end{array}
\right)
\]

and consider $ G = <A,B>\subset \GL(2,\mathbb C)$. Note that $$A^4 = B^2 = (AB)^2 = (BA)^2 = (BA^2)^2 =\Id.$$
Also $AB^2 = A$, $ABA = B$, $BAB = A^3$, $BA^2B = A^2$ and $BA^3B = A$. Thus $ G$ is finite:

\begin{multline*}
   G
=\left\lbrace
\left(
  \begin{array}{cc}
    1 & 0\\
     0 &  1
  \end{array}
\right),\,
\left(
  \begin{array}{cc}
    i & 0\\
     0 &  -i
  \end{array}
\right),\,
\left(
  \begin{array}{cc}
    0 & 1\\
     1 &  0
  \end{array}
\right),\,
\left(
  \begin{array}{cc}
    -1 & 0\\
     0 &  -1
  \end{array}
\right),\,
\left(
  \begin{array}{cc}
    -i & 0\\
     0 &  i
  \end{array}
\right),\,
\left(
  \begin{array}{cc}
    0 & -1\\
     -1 &  0
  \end{array}
\right), \right.\\ \left.
  \,
\left(
  \begin{array}{cc}
    0 & i\\
     -i &  0
  \end{array}
\right),\,
\left(
  \begin{array}{cc}
    0 & -i\\
     i &  0
  \end{array}
\right) \right\rbrace.\hspace{5 cm}
\end{multline*}

But $ G$ is not irreducible because $A$ and $B$ are not conjugate since $A$ and $B$
have different orders ($A^4 = \Id$ and $B^2 = \Id$).\\
The group  $ G$ above is not abelian. We may ask then what happens in the abelian case.
Again the  answer is negative: let $\lambda\in \mathbb C$ be such that $\lambda^n= 1$ for some $n\in\mathbb{N}$ $n>4$ and let

\[
A
=\left(
  \begin{array}{cc}
    i & 0\\
     0 &  -i
  \end{array}
\right)\hspace{0.5 cm}
\mbox{and}\hspace{0.5 cm}
B = \left(
  \begin{array}{cc}
    1 & 0\\
     0 &  \lambda
  \end{array}
\right).
\]

We take $ G = <A,B>\subset \GL(2,\mathbb C)$ then $ G$ is abelian and finite but not irreducible ($A^4 = \Id = B^n$, $n\ne4$).
Note that $ G$ is finite abelian,  not  generated by a single element, and it is not irreducible.\\

Let us now show (3).
Put

\[
A
=\left(
  \begin{array}{cc}
    1 & 0\\
     0 &  -1
  \end{array}
\right),\hspace{0.5 cm}
B = \left(
  \begin{array}{cc}
    -\frac{1}{2} & 1\\
     \frac{3}{4} &  \frac{1}{2}
  \end{array}
\right)
\hspace{0.5 cm}
\mbox{and}\hspace{0.5 cm}
C = \left(
  \begin{array}{cc}
    -\frac{1}{2} & 2\\
     \frac{3}{8} &  \frac{1}{2}
  \end{array}
\right).
\]

Note that $A^2 = B^2 = C^2 = \Id$, now take

\[
H = B^{-1}A = \left(
  \begin{array}{cc}
    -\frac{1}{2} & -1\\
     \frac{3}{4} &  -\frac{1}{2}
  \end{array}
\right)
\hspace{0.5 cm}
\mbox{and}\hspace{0.5 cm}
T = C^{-1}A = \left(
  \begin{array}{cc}
    -\frac{1}{2} & -2\\
     \frac{3}{8} &  -\frac{1}{2}
  \end{array}
\right).
\]

Then $H,T\in G$, $HBH^{-1} = A$, $TCT^{-1} = A$ and $T^{-1}HB(T^{-1}H)^{-1} = C$. Thus, the group:

\[
G = <A,B,B,C,C,A>
\]

is irreducible and not cyclic. Now we will verify that the group $G$ is not finite. Indeed, we take

\[
BC = \left(
  \begin{array}{cc}
    -\frac{1}{2} & 1\\
     \frac{3}{4} &  \frac{1}{2}
  \end{array}
\right)
\cdot\left(
  \begin{array}{cc}
    -\frac{1}{2} & 2\\
     \frac{3}{8} &  \frac{1}{2}
  \end{array}
\right) =
\left(
  \begin{array}{cc}
    \frac{5}{8} & -\frac{1}{2}\\
     -\frac{3}{16} &  \frac{7}{4}
  \end{array}
\right).
\]

Now we will study the signal of entries in the matrix $BC$ without importing its values. So we can represent the matrix $(BC)^2$:
\[
(BC)^2 =
\left(
  \begin{array}{cc}
    + & -\\
     - &  +
  \end{array}
\right)\cdot
\left(
  \begin{array}{cc}
    + & -\\
     - &  +
  \end{array}
\right) =
\left(
  \begin{array}{cc}
    (+\cdot +) + (-\cdot -) & (+\cdot-) + (-\cdot +)\\
     (-\cdot +) + (+\cdot -) &  (-\cdot -) + (+\cdot +)
  \end{array}
\right) =
\left(
  \begin{array}{cc}
    + & -\\
     - &  +
  \end{array}
\right).
\]

Thus each entries in the matrix increases in module. Therefore, there is no $n\in\mathbb{N}$ such that $(BC)^n = \Id$.\\

Finally, we address (4).
Let
\[
A
=\left(
  \begin{array}{cc}
    1 & 0\\
     0 &  -1
  \end{array}
\right),\hspace{0.5 cm}
B = \left(
  \begin{array}{cc}
    -\frac{1}{2} & 1\\
     \frac{3}{4} &  \frac{1}{2}
  \end{array}
\right)
\hspace{0.5 cm}
\mbox{and}\hspace{0.5 cm}
H = B^{-1}A = \left(
  \begin{array}{cc}
    -\frac{1}{2} & -1\\
     \frac{3}{4} &  -\frac{1}{2}
  \end{array}
\right).
\]

Note that $A^2 = B^2 = \Id$, $HBH^{-1} = A$ and $H^{-1}AH = B$. Now consider the $4\times 4$ matrices
formed by $2\times2$ diagonal blocks. We denote by $D_{A,A}$ the matrix with diagonal blocks $A$.
Now consider
\[
G = <D_{A,A}, D_{A,B}, D_{B,B}, D_{B,A}>\subset \GL(4,\mathbb C).
\]

We have that $D_{A,A}\cdot D_{A,B}\cdot D_{B,B}\cdot D_{B,A} = D_{A^2B^2,AB^2A} = \Id$.
The generators are conjugates 2 to 2 in the groups, without loss of generality we verify that $D_{A,A}$ is conjugates to $D_{A,B}$
in the groups. Indeed, take
\[
T= D_{A,B}^{-1}\cdot D_{A,A} = D_{A^{-1}A,B^{-1}A} = D_{\Id,H}\in G.
\]
Then
\[
T\cdot D_{A,B}\cdot T^{-1} = D_{\Id,H}\cdot D_{A,B}\cdot D_{\Id,H}^{-1} = D_{\Id\cdot A\cdot Id, HBH^{-1}} = D_{A,A}.
\]
This ends the proof.
\end{proof}
\bibliographystyle{amsalpha}

\end{document}